\newcounter{rem}
\newenvironment{remark}{\addtocounter{rem}{1}
\begin{enumerate} \item[] {\bf Remark~\therem . }}
{\end{enumerate}\bigskip}
\newtheorem{theorem-introduction}{Theorem}
\newtheorem{corollary-introduction}{Corollary}
\newtheorem{definition}[subsubsection]{Definition}
\newtheorem{lemma}[subsubsection]{Lemma}
\newtheorem{corollary}[subsubsection]{Corollary}
\newtheorem{theorem}[subsubsection]{Theorem}
\newtheorem{proposition}[subsubsection]{Proposition}
\theoremstyle{definition}
\def\supp{\ensuremath{\mathrm supp}}
\def\Ker{\ensuremath{\mathrm Ker}}
\def\PP{{\mathbb  P}}
\def\NN{{\mathbb  N}}
\def\ZZ{{\mathbb  Z}}
\def\CC{{\mathbb  C}}
\def\RR{{\mathbb  R}}
\def\HH{{\mathbb  H}}
\def\VN{ \text{M}}
\def\Ran{{\mathrm Ran}}
\def\U{\ensuremath{{\mathcal{U}}}}
\def\H{\ensuremath{{\mathcal{H}}}}
\def\cC{\ensuremath{{\mathcal{C}}}}
\def\ncC{\ensuremath{{n\mathcal{C}}}}
\def\cD{\ensuremath{{\mathcal{D}}}}
\newcommand{\Adh}[1]{\ensuremath{\overline{#1}}}
\newcommand{\dlog}[2]{\ensuremath{\Omega^{#1}_{X}(log #2)}}
\def\dbar{\ensuremath{{\overline{\partial}}}}
\title[]{A factorisation theorem for curves with vanishing self-intersection}
\author{P. Dingoyan}
\address{P. Dingoyan\\
 Universit\'e Paris 6 \\case 247, 4 place Jussieu, 75252 Paris Cedex 05, France.}
 \email{pascal.dingoyan@imj-prg.fr}
\begin{document}

\bibliographystyle{plain}


\maketitle

\section{introduction}
\subsection{ }
We give, in some cases, a positive answer to a question of F. Campana:
\begin{trivlist}
\item Let $i:C_{0}\to Y$ be a smooth curve in a compact K\"{a}hler surface. 
Assume:\begin{trivlist}
\item[i)] the self-intersection of $C_{0}$ is vanishing $C_{0}.C_{0}=0$, 
\item[ii)] the image of the fundamental group of $C_{0}$ in $Y$ is of infinite index $[\pi_{1}(Y):i_{*}\pi_{1}(C_{0})]=+\infty$.
\end{trivlist}
\item
\item
{\bf Question:} Is  $C_{0}$ a fiber of a holomorphic map $f:Y\to B$ from $Y$ to a curve ?
\end{trivlist}
\bigskip

Let $p: X\to Y$ be the infinite covering of $Y$ with fundamental group $i_{*}\pi_{1}(C_{0})$. Then $i:C_{0}\to Y$ lifts to an embedding $s:C_{0}\to X$ with  $s(C_{0}).s(C_{0})=0$.
We give the following positive answer:
\begin{theorem} Assume that a finite covering of $p:X\to Y$ is Galois or  
admits a proper Green function. Then there exists proper holomorphic maps $f':X\to B'$ and $f: Y\to B$ to curves $B$ and $B'$ such that $s(C_{0})$ is a fiber of $f'$ and $C_{0}$ is a fiber of $f$. In particular $X$ is holomorphically convex.
 \end{theorem}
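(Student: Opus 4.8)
The plan is to reduce the whole statement to a single geometric fact: that (a suitable finite cover of) $X$ is holomorphically convex with a Remmert reduction whose base is \emph{one}-dimensional. This reduction is efficient because it immediately yields the final assertion for free. Indeed, $X$ is non-compact, being an infinite covering of the compact surface $Y$, so any proper surjection $f':X\to B'$ onto a curve has non-compact target; a non-compact Riemann surface is Stein, and a proper holomorphic map to a Stein space forces the source to be holomorphically convex (the holomorphically convex hull of a compact $K$ sits inside $f'^{-1}$ of the compact hull of $f'(K)$, hence is compact). So once $f'$ is produced, holomorphic convexity of $X$ is automatic.

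First I would pass to a finite cover to normalise the situation. Under either alternative of the hypothesis we may choose a connected finite covering $q:\hat X\to X$ whose group $\hat H=\pi_1(\hat X)$ is normal of finite index in $H=i_*\pi_1(C_0)=\pi_1(X)$, and in the Galois case normal in $\pi_1(Y)$ as well, so that $\hat X\to Y$ is Galois with group $G=\pi_1(Y)/\hat H$. The preimage of $s(C_0)$ is then a connected smooth curve $\hat C\subset\hat X$, finite over $s(C_0)$, still carrying all of $\pi_1(\hat X)$ and still satisfying $\hat C\cdot\hat C=0$. Since finite quotients of holomorphically convex spaces remain holomorphically convex and the Remmert reduction is canonical, hence equivariant and descends to quotients, it suffices to build the fibration upstairs on $\hat X$; I therefore assume from now on that $X$ itself carries the chosen property.

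The heart of the matter, and the step I expect to be the \emph{main obstacle}, is holomorphic convexity of $X$: one must turn an analytic datum into an honest proper holomorphic map to a curve. If $X$ admits a proper Green function, it supplies the analytic input — a proper plurisubharmonic exhaustion together with $L^2$ holomorphic data at infinity — needed to run the Napier--Ramachandran construction: via the $L^2$ $\bar\partial$-method one manufactures two independent closed holomorphic $1$-forms with vanishing wedge product, and a Castelnuovo--de Franchis type argument converts them into a proper holomorphic map onto a Riemann surface. In the Galois case one instead invokes holomorphic convexity of Galois covers of compact K\"ahler surfaces: the curve $s(C_0)$ of vanishing self-intersection carrying the full fundamental group forces a non-parabolic structure with sufficiently many filtered ends, from which the structure theory of ends again yields a proper holomorphic map onto a Riemann surface. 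Either way $X$ is holomorphically convex; let $R:X\to\Sigma$ be its Remmert reduction onto a Stein space, with $R_*\mathcal{O}_X=\mathcal{O}_\Sigma$.

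Finally I would identify the fiber and descend. Since $\Sigma$ is Stein it carries no positive-dimensional compact analytic subset, so the compact curve $s(C_0)$ is contracted and $R(s(C_0))$ is a point. As $X$ is non-compact, $\dim\Sigma\ge 1$; and $\dim\Sigma=2$ is impossible, for then $R$ would be bimeromorphic and Grauert's contractibility criterion would make every contracted curve negative-definite, contradicting $s(C_0)\cdot s(C_0)=0$. Hence $\dim\Sigma=1$; after normalising, $B':=\Sigma$ is a non-compact Riemann surface and $f':=R:X\to B'$ is the desired proper map. The curve $s(C_0)$ sits in a connected fiber $F=\sum m_iC_i$; from $s(C_0)\cdot F=0$ and connectedness of $F$, any component other than $s(C_0)$ would meet it and contribute a strictly positive term, forcing $s(C_0)\cdot s(C_0)<0$, which is absurd, so $F$ is reduced and equals $s(C_0)$. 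Descent through the finite maps then gives $f':X\to B'$ for the original $X$ and, in the Galois case, $f:Y\to B=\Sigma/G$ by equivariance of $R$, with $C_0$ a fiber. In the remaining case the same conclusion follows by noting that the members of $f'$ near $s(C_0)$ project under $p$ to a covering family of deformations of $C_0$ that are pairwise disjoint, being homologous with $C_0\cdot C_0=0$, and a covering family of pairwise disjoint curves on the compact surface $Y$ is precisely a holomorphic fibration $f:Y\to B$ with $C_0$ as a fiber. Once holomorphic convexity is secured, the self-intersection-zero hypothesis carries out all the remaining work.
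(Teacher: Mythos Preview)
Your reduction strategy---prove holomorphic convexity, take the Remmert reduction, argue $\dim\Sigma=1$ via Grauert, identify the fiber, and descend---is sound \emph{once holomorphic convexity is in hand}. The gap is that you treat that step as essentially citable, and in both cases your sketch of it does not work.

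In the Galois case you write that one ``invokes holomorphic convexity of Galois covers of compact K\"ahler surfaces'' because the curve ``forces a non-parabolic structure with sufficiently many filtered ends''. But holomorphic convexity of Galois covers is the Shafarevich conjecture, which is open; and nothing in the hypotheses produces multiple ends or the end-structure needed for the Napier--Ramachandran machinery. In the Green-function case you say the Green function supplies ``a proper plurisubharmonic exhaustion together with $L^2$ holomorphic data at infinity'' from which one ``manufactures two independent closed holomorphic $1$-forms''. A Green function is harmonic off its pole, not plurisubharmonic, and tends to zero at infinity rather than to $+\infty$; it is not an exhaustion, and the $L^2$ $\bar\partial$-method does not by itself produce closed holomorphic $1$-forms with zero wedge. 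In both cases you never explain how the hypothesis $C\cdot C=0$ enters the construction; in your write-up it is used only \emph{after} the map exists, to pin down the fiber.

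What the paper actually does is quite different and uses $C\cdot C=0$ at the very first step. The key new observation is a Hodge index lemma on complete K\"ahler manifolds of infinite volume: every $L^2$-harmonic $(1,1)$-form is primitive, so a compactly supported closed $(1,1)$-form $\alpha$ with $\int\alpha\wedge\alpha\wedge\omega^{n-2}\ge 0$ is orthogonal to harmonics and hence $d$-exact. Applied to $\alpha=c_1([s(C_0)],h)$ (compactly supported because $|s|\equiv 1$ off a compact), this kills the obstruction to the Poincar\'e--Lelong equation. One then upgrades $d$-exactness to $\partial\bar\partial$-exactness: in the Galois case via the author's Galois $\partial\bar\partial$-lemma (after acting by a weak isomorphism in $M(\Gamma)$), and in the Green-function case by solving $\Delta u=\Lambda\alpha$ and showing, via integration by parts against cutoffs, that the primitive residual form vanishes. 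Either way one obtains $dd^c(\ln|s|^2+u)=[C]$, hence a function pluriharmonic off the orbit of $|C|$ with logarithmic poles along it. From $\partial$ of this one gets closed logarithmic $1$-forms; translating by deck transformations (Galois case) or comparing the pluriharmonic data on different sheets (Green case) then yields nonconstant holomorphic functions near $|C|$, and the local-to-global extension theorem for proper fibrations (Barlet-space argument, Theorem~\ref{deformation}) finishes. The self-intersection hypothesis is doing essential analytic work at the start, not just bookkeeping at the end.
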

The theorem applies in particular when $i_{*}\pi_{1}(C_{0})$ is finite and $\pi_{1}(Y)$ is infinite.
We give below a more general statement for an effective divisor in a K\"{a}hler compact manifold.
\subsection{ }
The strategy is to solve the Poincar\'{e}-Lelong equation $i\partial\dbar u=s(C_{0})$ (in the sense of currents) and then use the logarithmic form $\partial u$ to produce the required fibration. In general, the obstruction to solve this equation lies in the first Chern class of the line bundle $[s(C_{0})]$. Our basic observation is that $s(C_{0}).s(C_{0})=0$ implies that the first Chern class of $[s(C_{0})]$ vanishes in cohomology. This fact is a consequence of the Hodge index theorem in complete K\"{a}hler manifolds of infinite volume (see sect. \ref{section Hodge index}). In particular: 
\begin{proposition} Let $C_{0}$ be a divisor in a smooth K\"{a}hler surface which satisfies $i)$ and $ii)$ above. Then its first Chern class $c_{1}([C_{0}])\in H^{2}(Y,\ZZ)$ lies in $H^{2}(\pi_{1}(Y),\ZZ)$.
\end{proposition}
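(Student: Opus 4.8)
The plan is to separate a soft topological reduction from the hard analytic input, reducing the statement to the vanishing of $c_{1}([C_{0}])$ on a covering and then feeding in the Hodge index theorem of Section~\ref{section Hodge index}.

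First I would record the topological dictionary. Write $\pi=\pi_{1}(Y)$, let $\widetilde p:\widetilde Y\to Y$ be the universal cover, and view $Y$ through its classifying map $Y\to K(\pi,1)$ as the total space of a fibration with fibre $\widetilde Y$. Since $\widetilde Y$ is simply connected we have $H^{1}(\widetilde Y,\ZZ)=0$, so the term $E_{2}^{1,1}=H^{1}\!\big(\pi,H^{1}(\widetilde Y,\ZZ)\big)$ of the Serre spectral sequence vanishes; the exact sequence of low-degree terms then reduces to
$$H^{2}(\pi,\ZZ)\longrightarrow H^{2}(Y,\ZZ)\xrightarrow{\ \widetilde p^{*}\ }H^{2}(\widetilde Y,\ZZ),$$
in which the image of the inflation map equals $\ker(\widetilde p^{*})$. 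Since $H^{2}(\pi,\ZZ)$ is by definition the image of this inflation map, it suffices to prove $\widetilde p^{*}c_{1}([C_{0}])=0$. Because $\widetilde p$ factors through the cover $p:X\to Y$ of the introduction as $\widetilde Y\xrightarrow{q}X\xrightarrow{p}Y$, it is in fact enough to show $p^{*}c_{1}([C_{0}])=c_{1}(p^{*}[C_{0}])=0$ in $H^{2}(X,\ZZ)$.

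Next I would insert the analytic content. By hypothesis ii) the index $[\pi_{1}(Y):i_{*}\pi_{1}(C_{0})]$ is infinite, so the pulled-back Kähler metric makes $X$ a complete Kähler manifold of infinite volume. The preimage $p^{-1}(C_{0})$ is a smooth, locally finite, pairwise disjoint union of components $C_{\alpha}$, each a lift of $C_{0}$; being disjoint from $s(C_{0})$ and locally modelled on it, each satisfies $C_{\alpha}.C_{\alpha}=0$. The basic observation, proved by the Hodge index theorem of Section~\ref{section Hodge index} on such infinite-volume manifolds, then gives $c_{1}([C_{\alpha}])=0$ for every $\alpha$. Since $p^{*}[C_{0}]=[p^{-1}(C_{0})]=\bigotimes_{\alpha}[C_{\alpha}]$, I would upgrade these componentwise vanishings to the triviality of $p^{*}[C_{0}]$ by solving the Poincaré–Lelong equation $i\partial\dbar u=p^{-1}(C_{0})$ globally on $X$; the standing hypotheses (a finite cover which is Galois, or admits a proper Green function) are exactly what force the formal sum of the local potentials to converge to an honest global solution, which in turn exhibits $p^{-1}(C_{0})$ as a principal divisor and trivialises $p^{*}[C_{0}]$. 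Then $p^{*}c_{1}([C_{0}])=0$, and pulling back by $q$ completes the chain.

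The main obstacle is precisely this last upgrade: passing from the vanishing of the individual classes $c_{1}([C_{\alpha}])$ to the vanishing of the pulled-back class $p^{*}c_{1}([C_{0}])$ attached to the \emph{whole}, infinite preimage. The difficulty is analytic, namely the summability of the potentials over the infinitely many components, where infinite volume together with the Green-function or Galois hypothesis is indispensable. A secondary, arithmetic point is that the topological reduction demands vanishing in integral cohomology $H^{2}(\widetilde Y,\ZZ)$ rather than merely in $H^{2}(\widetilde Y,\RR)$, so one must genuinely trivialise the line bundle by a global (meromorphic) section and not only render its real Chern form exact. Once $p^{*}c_{1}([C_{0}])=0$ is established, the exact sequence of the first paragraph places $c_{1}([C_{0}])$ in $H^{2}(\pi_{1}(Y),\ZZ)$.
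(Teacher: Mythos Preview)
There are two genuine problems. First, the proposition assumes only (i) and (ii); the Galois and proper-Green-function hypotheses belong to the main theorem, not to this statement, so you cannot invoke them here and the global Poincar\'e--Lelong solution you rely on is simply not available. Second, in the intermediate cover $X$ with $\pi_1(X)=i_*\pi_1(C_0)$, the components $C_\alpha$ of $p^{-1}(C_0)$ are \emph{not} all compact lifts of $C_0$: only the distinguished one $s(C_0)$ is. The others cover $C_0$ with degree $[G:G\cap gGg^{-1}]$ for various $g\in\pi_1(Y)$ (where $G=i_*\pi_1(C_0)$), and these degrees may well be infinite, so those $C_\alpha$ need not be compact. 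The Hodge index input (Corollary~\ref{negativity}(1)) requires a compact divisor, so your claim $c_1([C_\alpha])=0$ is unjustified for such components. Reducing to $p^*c_1([C_0])=0$ on $X$ is thus stronger than what is needed and not what the paper establishes.

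The paper (Corollary~\ref{negativity}(2)) works on the universal cover $\widetilde Y$ and treats each component $\widetilde C_i$ of $\pi^{-1}(C_0)$ via its own intermediate cover: pass to the quotient $X_i=\widetilde Y/G_i$ by the stabiliser $G_i$ of $\widetilde C_i$. There the image $C_i$ of $\widetilde C_i$ \emph{is} compact and biholomorphic to $C_0$ (each $G_i$ is a conjugate of $i_*\pi_1(C_0)$), so Hodge index applies and gives $c_1([C_i])=0$ in $H^2(X_i,\RR)$; pulling back, $c_1([\widetilde C_i])=0$ in $H^2(\widetilde Y,\RR)$. No summability of potentials is needed to pass to the full preimage: every class in $H_2(\widetilde Y)$ is represented by a compact cycle $\sigma$ meeting only finitely many $\widetilde C_i$, whence $\langle \pi^*c_1([C_0]),\sigma\rangle=\sigma\cdot\pi^{-1}(C_0)=\sum_i\sigma\cdot\widetilde C_i=0$. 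Since $\widetilde Y$ is simply connected, $H^2(\widetilde Y,\ZZ)$ is torsion-free, and vanishing over $\RR$ already forces vanishing over $\ZZ$; no global trivialisation of the line bundle is required.
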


We then find conditions which imply that the $\partial\dbar-$lemma of K\"{a}hler geometry is valid in the non compact setting: in the case of a Galois cover, we use the Galois $\partial\dbar-$lemma that was proved by the author in \cite{Din2013}. In the case where a proper Green function exists, we follow the approach of Gromov \cite{GroKahHyp} and Napier-Ramachandran \cite{NapRam1995}, \cite{NapRam2001}:
 to solve the $\partial\dbar-$equation, it is enough to solve the Laplace equation on
  functions and integrate by parts if the geometry allows.
  
  \subsection{Comparison with some previous works }
In \cite{GroKahHyp}, \cite{NapRam1995}, \cite{NapRam2001}, the authors deduce factorisation theorems from the study of an analytic $1-$form associated to a square integrable or  compactly supported $1-$form. 
 Here one uses a $2-$form with compact support and a positivity assumption and produces a logarithmic $1-$form. Then the study of the foliation follows almost the same pattern than the aformentioned articles. However, in the case of a Galois covering, the use of the automorphism group allows a more elementary construction.
 
M. Nori \cite{Nor} studied the image of the fundamental group of the normalization $\pi_{1}(\bar C)\to \pi_{1}(Y)$ of a non smooth curve $C$ whose irreducible components have positive self-intersection. This lead to a weak Lefschetz theorem: the image is of finite index.  From this point of view, the weak lefschetz theorem was reconsidered in \cite{NapRam1998}.

F. Campana \cite{CamNeg} studied consequences of the negativity of the self-intersection of a compact curve in an infinite covering and gave some factorisation theorems (see loc. cit. sect. 4): if a smooth curve in a K\"{a}hler compact surface satisfies $ii)$ and has a torsion normal bundle then $C$ is a fiber of a holomorphic map. His proof used Ueda's theory \cite{Ued} describing a neighborhood of a smooth compact curve with vanishing self-intersection in a complex surface. 

B.Tortaro \cite{Tor2000} and J.Pereira \cite{Per2006} proves some factorisation theorem for divisors with some vanishing hypothesis on Chern class. For instance,  assume  three smooth disjoint divisors in a manifold $X$ lie in the same rational cohomology class. Then they prove there exists a holomorphic map from $X$ to a curve such that each divisor is a fiber.
B.Tortaro \cite{Tor2000} gives examples of two smooth disjoint divisors in a surface $X$ which lie in the same rational cohomology class but which are not fiber of a map to a curve. One can check that the image of the fundamental group of each divisor is of finite indice in the fundamental group of $X$.

The topic of Shafarevitch conjecture and Shafarevich dimension is discussed in \cite{Cam1994}, \cite{Kol1995}. A different point of view is given by
Gurjar and his co-authors (see \cite{GurKol}, \cite{GurPur}): they studied some fibrations $p:Y\to B$ of compact  surfaces and proved that the image of the fundamental group of any fiber in the fundamental group of $Y$ is finite.
\subsection{Acknowledgement.}
I warmly thank F.Campana for the discussions around this subject. He is at the root of the question and discussions with him lead to some of the conclusions. Our contribution is mainly technical.
 I thank also the complex analysis team of Nancy for its pleasant welcome to its seminar.

\section{preliminaries}
In this section we recall that a local proper fibration always extends to a global proper fibration in a manifold of bounded geometry. We refer to  \cite{GroKahHyp} or \cite{ShubinNantes} for a definition of a manifold of bounded geometry. We note that a covering of a compact Riemannian manifold with the pull-back structure is of bounded geometry.
\subsection{Notations} If $D$ is a divisor, let $|D|$ be its support and let $[D]$ be the line bundle it defines.
We first recall: 
\begin{lemma}\label{proper}  Let $f:X\to Y$ be a continuous map between locally compact spaces. Let $p\in X$ be such that  $f^{-1}f(p)$ is compact. Then there exists neighborhoods $U$ of $f^{-1}f(p)$ and $U_{1}$ of $f(p)$ such that $f:U\to U_{1}$ is proper.
\end{lemma}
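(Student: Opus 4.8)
The plan is to prove this by a standard point-set argument, the crux being to \emph{trap} preimages of compact sets inside a precompact neighbourhood by arranging that nothing on its boundary gets mapped near $f(p)$. Throughout I would use that the spaces are Hausdorff (as they are in all our applications, being manifolds), so that compact subsets are closed and continuous images of compact sets are closed. Write $F := f^{-1}f(p)$ for the fibre, which is compact by hypothesis.

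First I would exploit local compactness together with the compactness of $F$ to produce an open set $V$ with $F\subseteq V$ and $\overline{V}$ compact: cover $F$ by finitely many precompact open sets and take $V$ to be their union. Its topological boundary $\partial V=\overline{V}\setminus V$ is then compact and, being contained in the complement of the open set $V\supseteq F$, is disjoint from $F$. Hence $f(p)\notin f(\partial V)$.

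Next I would set $U_{1}:=Y\setminus f(\partial V)$. Since $f(\partial V)$ is the continuous image of a compact set it is compact, hence closed, so $U_{1}$ is an open neighbourhood of $f(p)$. Then I put $U:=V\cap f^{-1}(U_{1})$, which is open, contains $F$ (because $f(F)=\{f(p)\}\subseteq U_{1}$), and satisfies $f(U)\subseteq U_{1}$ by construction.

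It then remains to verify properness. Given a compact $K\subseteq U_{1}$ one has $K\cap f(\partial V)=\emptyset$, so no point of $\partial V$ maps into $K$; consequently $U\cap f^{-1}(K)=V\cap f^{-1}(K)=\overline{V}\cap f^{-1}(K)$, the boundary contributing nothing. As $K$ is closed and $f$ continuous, $f^{-1}(K)$ is closed, so this set is a closed subset of the compact set $\overline{V}$ and is therefore compact. The main (and essentially only) obstacle is the boundary argument of this last step, namely ensuring that the choice of $U_{1}$ forces preimages of compacta to stay off $\partial V$; everything else is routine manipulation with local compactness.
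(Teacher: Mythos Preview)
Your argument is correct: the choice $U_{1}=Y\setminus f(\partial V)$ and $U=V\cap f^{-1}(U_{1})$ does exactly what is needed, and the identity $U\cap f^{-1}(K)=\overline{V}\cap f^{-1}(K)$ for $K\subset U_{1}$ compact is justified since $\partial V\cap f^{-1}(K)=\emptyset$. The explicit use of Hausdorffness (so that compact images are closed and $f^{-1}(K)$ is closed) is appropriate and harmless in the intended setting.

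There is nothing to compare on the level of ideas: the paper does not give its own argument for this lemma but simply refers the reader to Stein \cite{Stein} p.~77 (S\"atz~9) and \cite{BarHulPet} p.~27. Your write-up is essentially the classical proof found in those sources, so supplying it here is a genuine improvement in self-containedness rather than a different route.
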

\begin{proof} see \cite{Stein}  p.77 or S\"{a}tz 9 or \cite{BarHulPet} p.27. 
\end{proof}
\begin{definition} Let $D_{1}$ and $D_{2}$ be two divisors in a complex manifold $X$. Assume $D_{1}$ is compact. If $\dim_{\CC}X\geq 3$, let $\omega$ be a fixed K\"{a}hler form. Let $h$ be a smooth metric on $[D_{1}]$ such that the Chern form $c_{1}(h)$ has compact support. Then we defined the intersection number by
 \begin{eqnarray}\label{self-intersection}
 D_{1}.D_{2}:=\int_{X}c_{1}(D_{1})\wedge c_{1}(D_{2})\wedge \omega^{n-2}\,.
 \end{eqnarray}
 \end{definition} 
 From \cite{Dem1993MonAmp} sect.2 (in particular 2.11 and 2.12), one can check that the intersection number of divisors without common irreducible components is positive.
In particular if $\{D_{i}\}$ is a (finite) family of compact effective divisor such that $\cup_{i} |D_{i}|$ is connected then the 
  generalized Zariski's lemma holds for the intersection form on $\{D_{i}\}$ (see \cite{CamNeg} appendix A.A or \cite{BarHulPet} Chap.1 sect.2). 
  \begin{lemma}[Zariski's lemma]\label{Zariski's lemma}
  Let $(X,Y)\mapsto X.Y$ be a bilinear form defined on $\oplus_{i=1}^{n}\RR e_{i}$ such that $e_{i}.e_{j}\geq 0$ if $i\not=j$ and the matrix $(e_{i}.e_{j})_{i,j}$ is irreducible (the graph, with an edge between each pair $\{e_{i},\,e_{j}\}$ such that $e_{i}.e_{j}\not=0$, is connected). Then only one of the following possibility occurs for the quadratic form it defines:
  \item[i)] $Q$ is negative definite.
  \item[ii)] $Q$ is semi-definite of rank $n-1$, with a one dimensional annihilator spanned by a strictly positive vector $X_{0}>0$ (i.e. its coordinate are positive). 
  \item[iii)] $Q(X)>0$ for some  $X$, then there exists $X_{0}>0$ such that $\forall i,\, X_{0}.e_{i}>0$.
  \end{lemma}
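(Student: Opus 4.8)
The plan is to diagonalise the situation via the Perron--Frobenius theorem. First I would form the symmetric matrix $M=(e_{i}.e_{j})_{1\le i,j\le n}$, so that the quadratic form is $Q(X)=X.X=x^{t}Mx$, where $x=(x_{1},\dots,x_{n})$ are the coordinates of $X$ in the basis $(e_{i})$. The hypotheses say precisely that $M$ is real symmetric with nonnegative off-diagonal entries and that its associated graph is connected, i.e.\ $M$ is irreducible. These are not quite the hypotheses of the classical Perron--Frobenius theorem, which wants a genuinely nonnegative matrix, so the key device is a shift: choose a real $c$ large enough that $M+cI$ has all entries $\ge 0$. Since adding a scalar matrix alters neither the off-diagonal entries nor the connectivity of the graph, $M+cI$ is nonnegative and irreducible, and its eigenvectors are those of $M$ while its eigenvalues are those of $M$ translated by $c$.

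Then I would invoke Perron--Frobenius for nonnegative irreducible matrices: the spectral radius of $M+cI$ is an algebraically simple eigenvalue admitting a strictly positive eigenvector. Subtracting $c$, this says that $M$ possesses a largest eigenvalue $\mu$, which is simple, with a strictly positive eigenvector $X_{0}>0$; moreover, since $M$ is symmetric all its eigenvalues are real, so every eigenvalue other than $\mu$ lies in $[-\mu,\mu)$ relative to the shifted picture and in particular is strictly less than $\mu$. The whole statement now falls out by trichotomy on the sign of $\mu$, which is automatically mutually exclusive.

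If $\mu<0$ then all eigenvalues of $M$ are negative and $Q$ is negative definite, giving (i). If $\mu=0$ then $0$ is a simple eigenvalue and all others are strictly negative, so $Q$ is negative semi-definite of rank $n-1$ with annihilator the line $\RR X_{0}$, $X_{0}>0$, giving (ii); here one uses that the annihilator of the bilinear form coincides with the $0$-eigenspace of $M$, which simplicity forces to be one dimensional. If $\mu>0$ then $Q(X_{0})=\mu\,x_{0}^{t}x_{0}>0$, so $Q$ takes a positive value; and in this case the positivity assertion comes for free from the eigenvector equation $MX_{0}=\mu X_{0}$, which reads coordinate-wise $e_{i}.X_{0}=\sum_{j}(e_{i}.e_{j})(x_{0})_{j}=\mu(x_{0})_{i}>0$ for every $i$, since $\mu>0$ and each $(x_{0})_{i}>0$. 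This is exactly (iii).

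The proof is in essence a single application of Perron--Frobenius, so the only genuine point of technique is the shift $M\mapsto M+cI$ that converts the intersection matrix (symmetric, nonnegative off-diagonal, irreducible) into the nonnegative irreducible form required by that theorem; everything afterwards is bookkeeping on the sign of the simple top eigenvalue. The mild thing to keep an eye on is that \emph{semi-definite} in (ii) is meant in the negative sense, and that the strict positivity of the Perron vector is what delivers both the positive annihilating vector in (ii) and the vector with $X_{0}.e_{i}>0$ in (iii).
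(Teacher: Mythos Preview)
Your proof is correct. The paper itself does not supply a proof of this lemma; it merely states it and refers the reader to \cite{CamNeg}, Appendix~A.A, and \cite{BarHulPet}, Chap.~I, \S2. Your route via the shift $M\mapsto M+cI$ and Perron--Frobenius is clean and standard for results of this shape: once $c$ is large enough that all eigenvalues of $M+cI$ are positive, the spectral radius equals the top eigenvalue $\mu+c$, Perron--Frobenius makes it simple with a strictly positive eigenvector, and the trichotomy on the sign of $\mu$ falls out immediately. By contrast, the arguments in the cited sources are more hands-on: the version in \cite{BarHulPet} is tailored to the fibration setting, where a strictly positive isotropic vector (the fibre class $F=\sum n_i C_i$) is given from the outset, and one computes directly that $D.D=-\sum_{i<j}n_in_j(C_i.C_j)(b_i-b_j)^2\le 0$ for $D=\sum b_in_iC_i$, reading off the kernel; Campana's appendix treats the general trichotomy by elementary linear-algebra manipulations. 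Your approach is more uniform across the three cases and yields mutual exclusivity for free, at the cost of invoking Perron--Frobenius as a black box. One small wording issue: the phrase ``every eigenvalue other than $\mu$ lies in $[-\mu,\mu)$ relative to the shifted picture'' is slightly muddled; what you actually need and use is just that for $c$ large the eigenvalues of $M+cI$ are all positive, hence the spectral radius is $\mu+c$, and simplicity then gives strict inequality for the others.
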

%
One deduces:

\begin{lemma}\label{fibre}  Let $D$ be an effective divisor with connected compact support $|D|$ in a complex manifold $X$. If $\dim_{\CC}X\geq 3$, assume there exists a K\"{a}hler form $\omega$ and define self-intersection of a compact divisor as in (\ref{self-intersection}).
Assume $D.D=0$ and there exists an holomorphic map $f: X\to \Delta$ to the unit disc. Then $|D|$ is a connected component of a fiber of $f$. Hence $f$ is proper in a neighborhood of $|D|$. 
Moreover if $D$ is not a multiple of an effective divisor and $f(|D|)=0$, there exists $l\in \NN^{*}$ such that the divisor defined by the fiber over $0$ is equal to $l.D$

\end{lemma}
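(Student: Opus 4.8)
The plan is to first show that $f$ is forced to be constant on $|D|$, and then to use the triviality of the line bundle attached to the fibre together with Zariski's lemma (\ref{Zariski's lemma}) to pin down the fibre near $|D|$. \textbf{Step 1.} Each irreducible component of $D$ is a compact connected analytic set, on which the holomorphic function $f$ is constant by the maximum principle. As $|D|=\bigcup_i|D_i|$ is connected, these constants agree, so $f\equiv c$ on $|D|$ for some $c\in\Delta$ and $|D|\subseteq f^{-1}(c)$. Writing $D=\sum_i m_i D_i$ with $m_i>0$, each $D_i$ occurs in the fibre divisor $F:=\mathrm{div}(f-c)$ with multiplicity $a_i\ge 1$, so $F=\sum_i a_iD_i+R$ with $R$ effective and sharing no component with the $D_i$.

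\textbf{Step 2.} Next I would establish that $D_i.F=0$ for every $i$. Since $f-c$ is a global holomorphic function, $F$ is principal and $[F]$ is trivial, so in the defining formula (\ref{self-intersection}) I may represent $c_1(F)$ by the flat form $0$; as $D_i$ is compact its Chern form has compact support, which makes the integral converge and independent of this choice (two choices differing by $d\gamma$ give the same value by Stokes). Thus $D_i.F=0$. Setting $Q_{ij}:=D_i.D_j$ and $A:=(a_i)$, expansion of $F$ gives $(QA)_i=-\,D_i.R\le 0$ for all $i$, because $D_i$ and $R$ have no common component and hence meet nonnegatively.

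\textbf{Step 3.} I would then apply Zariski's lemma (\ref{Zariski's lemma}) to $\{D_i\}$: the off‑diagonal intersections are $\ge 0$ and the matrix is irreducible since $|D|$ is connected. Case (i) is excluded because $Q(D)=D.D=0$ with $D\ne 0$ contradicts negative definiteness. Case (iii) is excluded as well: it furnishes $X_0>0$ with $QX_0>0$, whence $A^{\top}QX_0$ is simultaneously $>0$ (from $A>0$, $QX_0>0$) and $\le 0$ (from $(QA)_i\le 0$, $X_0>0$), a contradiction. Hence case (ii) holds: $Q$ is negative semidefinite with one‑dimensional kernel spanned by a strictly positive $X_0$. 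Since $Q$ is negative semidefinite and $Q(D)=0$, we get $D\in\ker Q$, so $D=\lambda X_0$, $\lambda>0$. Using $X_0\in\ker Q$, $0=X_0^{\top}QA=\sum_i(X_0)_i(QA)_i=-\sum_i(X_0)_i\,(D_i.R)$ forces each $D_i.R=0$ (all summands being $\le 0$); by the local positivity of proper intersections recalled above this yields $|R|\cap|D|=\varnothing$, and moreover $QA=0$, so $A=\mu X_0$ is proportional to $D$.

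\textbf{Conclusion.} Since $f^{-1}(c)=|F|=|D|\sqcup|R|$ with $|D|$ compact and disjoint from the closed set $|R|$, the set $|D|$ is clopen in $f^{-1}(c)$, hence a connected component of the fibre; choosing a neighborhood $U$ of $|D|$ with $U\cap f^{-1}(c)=|D|$ and applying Lemma \ref{proper} to $f|_U$ shows $f$ is proper near $|D|$. Finally, when $f(|D|)=0$ and $D$ is not a multiple of an effective divisor, $\gcd_i m_i=1$; as $a_i=(\mu/\lambda)m_i$ are positive integers, the common ratio $\mu/\lambda$ must be an integer $l\in\NN^{*}$, so near $|D|$ the fibre over $0$ equals $\sum_i a_iD_i=l\,D$. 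The main obstacle I anticipate is Step 2: making the vanishing $D_i.F=0$ fully rigorous in the noncompact setting, where $[F]$ must be handled through a compactly supported representative for $c_1(D_i)$, together with the observation that it is precisely the fibre $F$ that rules out Zariski's case (iii) and thereby delivers the negative semidefiniteness needed in Step 3.
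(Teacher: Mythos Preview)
Your argument is correct and follows precisely the route the paper indicates: the paper gives no explicit proof but writes ``One deduces:'' immediately after Zariski's Lemma~\ref{Zariski's lemma}, and your Steps~1--3 supply exactly the deduction intended, with the triviality of $[F]$ giving $D_i.F=0$ and Zariski's trichotomy then forcing case~(ii). The only point worth tightening is in Step~3, where you pass from $D_i.R=0$ to $|D_i|\cap|R|=\varnothing$: this uses that for divisors without common component the intersection current $[D_i]\wedge[R]$ is a nonzero positive $(2,2)$-current on $|D_i|\cap|R|$ whenever the latter is nonempty (the paper's reference to \cite{Dem1993MonAmp}~2.11--2.12 covers this), so that pairing with $\omega^{n-2}$ is strictly positive; you might make that explicit.
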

%
The following theorem is noteworthy: a local proper holomorphic map  to a curve extends to a global proper holomorphic map. The second point says that a divisor moves if it moves in a covering. We refer to \cite{GroKahHyp}, \cite{AraBreRam}, \cite{NapRam1995}, \cite{NapRam1998} and \cite{Tor2000} for variations on this theorem. 
For the sake of completness, we sketch a proof using Barlet's space. 
\begin{theorem}  \label{deformation} \item[1)] Let $H\to X$ be a compact hypersurface in a K\"{a}hler complex manifold  of bounded geometry.  Assume that there exists a neighborhood $W$ of $H$ and an holomorphic map $f:W\to \Delta$ to the unit disc such that $f^{- 1}(0)=H$. Then there exists a curve $B$ and a proper map $a:X\to B$ extending $f$. In particular $X$ is holomorphically convex.
\item[2)] Let $D_{0}$ be a divisor in a compact K\"{a}hler manifold $Y$. Assume $D_{0}.D_{0}=0$ and $D_{0}$ is not a sum of non trivial effective divisors with this property. Assume there exists a covering $p: X\to Y$ such that a compact connected component 
$T$ of $p^{-1}(|D_{0}|)$ admits a non constant holomorphic function on one of its neighborhood. Then there exists a holomorphic map $b:Y\to B$ to a curve such that a multiple of $D_{0}$ is a fiber.
\end{theorem}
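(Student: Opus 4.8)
The plan is to build both fibrations through Barlet's space $\mathcal{C}(X)$ of compact analytic cycles, the guiding idea being that while holomorphic functions do not descend along a covering, compact cycles do descend as direct images. For part 1) I would first shrink $W$: since $f^{-1}(0)=H$ is compact, Lemma \ref{proper} lets me assume $f\colon W\to\Delta$ is proper, so the fibres $C_t:=f^{-1}(t)$, $t\in\Delta$, are compact hypersurfaces, mutually disjoint, and all homologous to $H$ in $X$. The assignment $t\mapsto C_t$ is then a holomorphic arc into $\mathcal{C}(X)$, and I would let $S$ be the irreducible component of $\mathcal{C}(X)_{\mathrm{red}}$ containing it. Because the homology class is locally constant along an irreducible family and $X$ is K\"ahler, every cycle $z\in S$ is homologous to $H$, has the same volume $V=\int_H\omega^{n-1}$, and is an effective cycle of class $[H]$ with vanishing self-intersection $z.z=H.H=0$.

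Next I would show that the members of $S$ are pairwise disjoint. For $z,z'\in S$ one has $z.z'=z.z=0$ since $z\sim z'$; if $|z|$ and $|z'|$ met without coinciding, the positivity of the intersection number of the components meeting along $|z|\cap|z'|$, combined with Zariski's Lemma \ref{Zariski's lemma}, would force a strictly positive contribution, contradicting $z.z'=0$ (near $H$ this is transparent, the $C_t$ being level sets of $f$). Hence through each point of $\Sigma:=\bigcup_{z\in S}|z|=\mathrm{ev}(\mathcal{G})$, where $\mathcal{G}\subset S\times X$ is the universal family with evaluation $\mathrm{ev}$, passes exactly one cycle. Since the one-parameter deformation sweeps out a neighbourhood of each leaf, $\Sigma$ is open.

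The main work will be to prove that $\Sigma$ is also closed, i.e. that the leaf through a limit point survives. Here bounded geometry is essential: $X$ is a covering of a compact K\"ahler manifold and so carries a complete metric of bounded geometry, which is exactly what makes a Bishop-type compactness statement with local volume bounds available. Given $x_n\in\Sigma$ with $x_n\to x$, the leaves $z_n\ni x_n$ all have volume $V$, so by the local mass bound near $x$ they subconverge near $x$ to a nonzero effective germ through $x$; this germ is disjoint from the other leaves, hence completes to a member of $S$ and $x\in\Sigma$. As $X$ is connected, $\Sigma=X$, the leaves foliate $X$, and $x\mapsto(\text{leaf through }x)$ descends to a holomorphic map $a\colon X\to B$ onto the normalised one-dimensional parameter space $B$, extending $f$; its fibres being the compact leaves, $a$ is proper, and since $B$ is a curve this yields holomorphic convexity of $X$. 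I expect the closedness of $\Sigma$ — controlling the family at infinity of the non-compact $X$ so that no leaf loses mass near a finite point — to be the principal obstacle, and it is precisely where bounded geometry is used.

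For part 2), since $p$ is a covering and a local biholomorphism, $T$ is a compact divisor with connected support and $T.T=D_0.D_0=0$. The given non-constant holomorphic function $g$ on a neighbourhood of $T$, composed with a chart into $\Delta$, meets the hypotheses of Lemma \ref{fibre}: $|T|$ is then a connected component of a fibre of $g$, and, $D_0$ (hence $T$) not being a nontrivial sum of effective divisors of vanishing self-intersection, the relevant fibre is $l\,T$ for some $l\in\NN^{*}$; rescaling produces a local proper fibration $f\colon V\to\Delta$ with $f^{-1}(0)=|T|$. Applying part 1) to $X$ and to $T$ yields a proper $a\colon X\to B'$ with $T$ a fibre, together with a family $\{T_s\}_{s\in\Delta}$ of mutually disjoint compact divisors near $T$. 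I would then descend to $Y$: shrinking the neighbourhood so that $p$ is injective on it, the direct images $p_*T_s$ are mutually disjoint compact divisors, all homologous to $p_*T=l_0\,D_0$, varying nontrivially with $s$. Thus $l_0D_0$ moves in a nonconstant family of disjoint compact divisors in the \emph{compact} K\"ahler manifold $Y$; the corresponding component of $\mathcal{C}(Y)$ is now compact, and evaluating this covering family and passing to the Stein factorisation gives a holomorphic map $b\colon Y\to B$ to a curve of which $l_0D_0$, a multiple of $D_0$, is a fibre.
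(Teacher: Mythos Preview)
Your approach to part 1) is essentially the paper's: both route through Barlet's space, use that $D_s.D_{s'}=0$ forces generic disjointness, and invoke bounded geometry plus Bishop compactness to fill out $X$. The paper phrases the endgame as properness of the evaluation map from the universal family $\cD\to X$, while you phrase it as an open--closed argument for $\Sigma$; these are equivalent. One point to sharpen: in your closedness step you only invoke the volume bound, but the crucial consequence of bounded geometry (which the paper cites via \cite{Cam1994}) is that homologous irreducible cycles have bounded \emph{diameter}. Without the diameter bound the cycles $z_n$ through $x_n$ could drift to infinity and your local germ would not complete to a global cycle in $S$; with it, all $z_n$ sit in a fixed compact set and Bishop applies directly.

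Part 2) has a genuine gap. You write ``shrinking the neighbourhood so that $p$ is injective on it,'' but this is false in general: $T$ is merely a compact connected component of $p^{-1}(|D_0|)$, and the restriction $p|_T\colon T\to |D_0|$ is a finite covering of some degree $m\geq 1$, not a biholomorphism. Consequently your disjointness claim for the pushforwards $p_*T_s$ is unjustified, and your descent of the family breaks down as stated. The paper avoids this by descending the \emph{function} rather than the cycles: choose $W'$ a neighbourhood of $T$ so that $p'\colon W'\to p(W')$ is a finite covering of degree $m$, and form the elementary symmetric functions $\sigma_i(f)$ of $f$ along the fibres of $p'$. These are holomorphic on a neighbourhood of $|D_0|$ in $Y$, and in particular the norm $\sigma_m(f)$ vanishes on $|D_0|$ to order $m\cdot l$. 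One then applies the machinery of part 1) directly on the compact manifold $Y$ with this function. Your cycle-pushforward idea is in fact salvageable (the arc $s\mapsto p_*T_s$ is non-constant precisely because $\sigma_m(f)$ is), but the clean route is the norm map, and the injectivity shortcut you took is not available.
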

\begin{proof}\item[1.0)] Using lemma \ref{proper}, we may assume that $f$ is proper. Let $f^{*}(0)$ be the divisor defined by the fiber over $0$. 
Let $D$ be the smallest effective divisor supported on $f^{-1}(0)$ such that $D.D=0$. Zariski's Lemma \ref{Zariski's lemma} implies $f^{*}(0)=l.D$ for some $l\in \NN^{*}$. Let $\cC_{l.D}(X)$ be the connected component of Barlet's space \cite{Bar} of $X$ which contained $l.D$. Let $n:\ncC_{l.D}(X)\to \cC_{l.D}(X)$ be a normalization of this space. If $s$ is a point of Barlet's space, let $D_{s}$ be the associated cycle.
From the definition of Barlet's space, the map $f$ defines a morphism  $i:\Delta\to \cC_{l.D}(X)$ that maps $0$ to $l.D$. Shrinking $\Delta$ if necessary, we may assume that a fiber over $\Delta^{*}$ is smooth. 
\item[1.1)] From \cite{Bar} Lemma 1 p.37 and the fact that a cycle $D_{s}$ is homologuous to $l.D$, we deduce that
   $i:\Delta\to \cC_{l.D}(X)$ is a local parametrization and $\cC_{l.D}(X)$ is a smooth curve at $l.D$.

\item[1.2)] 
Let $\cD=\{(x,s)\in  X\times \ncC_{D}(X) \text{ such that } x\in D_{s}\}$ be the universal divisor with associated projections $f_{1}$, $f_{2}$.
From \cite{Bar} th1 p.38 and $1.1)$, we deduce that $D_{s}$ is reduced and irreducible if $s$ is a generic point in $ \ncC_{D}(X)$. But if $s,s'\in \ncC_{D}(X)$, then $D_{s}.D_{s'}=0$, 
hence for generic $s,s'$, these divisors are either disjoint or equal. This implies:
\begin{eqnarray*}
ii)\,\cD\to X \text{ is finite}& \text{and} &ii)\, \cD\to X \text{ is generically one to one.}
\end{eqnarray*}

 \item[1.3)] The main point is that $\cD\to X$ is proper, hence onto, if $(X,\omega)$ is of bounded geometry: in such a manifold, homologuous irreducible cycles have bounded volume and bounded diameter.  Hence Bishop's compactness theorem may be applied. For a proof, see e.g. \cite{Cam1994} prop.3.12. 

\item[1.4)]
 The map $\cD\to X$ is one to one and a biholomorphism for $X$ is normal. Let $g=f_{1}^{-1}:X\to \cD$.
 Then $a=f_{2}\circ g:X\to \ncC_{D}(X)$ is a proper holomorphic map such that $f_{2}\circ g_{|W}=i\circ f$ and $f^{-1}(0)=l.D$.

\item[2)] Let $p:X\to Y$ be an etale covering. Assume a connected component $T$ of $p^{-1}(|D_{0}|)$ is compact. Let $D$ be the divisor with support $|D|=T$ and structure scheme defined by $p^{*}(D_{0})$.

Let $W'$ be a neighborhood of $|D|$ such that $p'=p_{|W'}:W'\to p(W')$ is a finite covering of order $m$ and $W'$ is the connected component of $p^{-1}p(W')$ which contains $|D|$ (one uses a retract of a neighborhood of $|D_{0}|$ onto $|D_{0}|$).

 Let $\sigma_{i}(f)$ be the $i-$th symetric function of $f$ with respect to $p'$. Then $\sigma_{i}(f)$ vanishes on $|D_{0}|=p(|D|)$. 
But $D_{0}.D_{0}=0$ implies that 
$(\sigma_{i}(f)=0)=\alpha_{i}D_{0}+L_{i}$, where $L_{i}$ is a divisor with support disjoint from $|D_{0}|$.  Using that $D$ is a connected component of a pullback, one easily sees that $\alpha_{m}=m.l$.

We deduce from the first case of the theorem that $\cC_{m.l.D_{0}}(Y)$ is a smooth curve at $m.l.D_{0}$ and there exists
   a map $b:Y\to \ncC_{m.l.D_{0}}(Y)$  induced by $\sigma_{m}(f)$. 
\end{proof}
\section{The Hodge index theorem for square integrable $(1,1)-$forms.}\label{section Hodge index}
\subsection{The Hodge index theorem}

This section contains our main lemma.
\begin{lemma}\label{Hodge index} Let $(X,\omega)$ be a complete K\"{a}hler manifold of infinite volume. 
\item[1)] Any $l^{2}-$harmonic $(1,1)-$form is primitive. In particular the intersection form is negative definite on $\H^{1,1}_{(2)}(X)$ the space of square integrable harmonic $(1,1)-$forms.
\item[2)] Let $\alpha$ be a closed $(1,1)-$form with compact support such that $\int_{X}\alpha\wedge \bar \alpha\wedge \omega^{n-2}\geq 0$, then $\alpha$ is orthogonal to the harmonic forms and $\alpha$ is $d-$exact. 
\end{lemma}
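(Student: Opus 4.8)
The plan is to treat the two assertions in turn, using the Kähler identities together with the $L^{2}$ Hodge theory available on a complete manifold.

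For part 1, I would first establish primitivity. Let $\Lambda$ denote the adjoint of the Lefschetz operator $L=\omega\wedge\cdot$, so that a $(1,1)$-form $\alpha$ is primitive exactly when its trace $\Lambda\alpha$ vanishes. If $\alpha$ is an $L^{2}$ harmonic $(1,1)$-form, then $\Lambda\alpha$ is an $L^{2}$ function, and since $[\Delta,\Lambda]=0$ on a Kähler manifold it is harmonic. A harmonic $L^{2}$ function on a complete manifold is constant (Yau), and on a manifold of infinite volume a constant lying in $L^{2}$ must be $0$; hence $\Lambda\alpha=0$ and $\alpha$ is primitive. The negativity of the intersection form is then the pointwise Weil identity for primitive $(1,1)$-forms, $\alpha\wedge\bar\alpha\wedge\omega^{n-2}=-(n-2)!\,|\alpha|^{2}\,dV$, so that $\int_{X}\alpha\wedge\bar\alpha\wedge\omega^{n-2}=-(n-2)!\,\|\alpha\|^{2}$, which is strictly negative unless $\alpha=0$; polarising gives negative definiteness on $\H^{1,1}_{(2)}(X)$.

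For part 2, I would use the orthogonal decomposition of $L^{2}$ forms on a complete manifold, writing $\alpha=h+g$ with $h$ the harmonic projection and $g$ orthogonal to $\H$. Since $\alpha$ is of type $(1,1)$ and closed, and $\Delta$ preserves bidegree, $h$ is a harmonic $L^{2}$ $(1,1)$-form, hence primitive by part 1, while $g$ is closed and orthogonal to $\H$, so $g$ lies in the $L^{2}$-closure of $d$ applied to compactly supported forms. The key computation is that in $\int_{X}\alpha\wedge\bar\alpha\wedge\omega^{n-2}$ every term involving $g$ drops out: writing $g=\lim_{j}d\beta_{j}$ with $\beta_{j}$ compactly supported, and using that $\bar h\wedge\omega^{n-2}$ and $\bar g\wedge\omega^{n-2}$ are closed (because $h$, $g$ and $\omega$ are closed), Stokes' theorem makes each pairing $\int_{X}d\beta_{j}\wedge\bar h\wedge\omega^{n-2}$ and $\int_{X}d\beta_{j}\wedge\bar g\wedge\omega^{n-2}$ vanish, and their conjugate pairings likewise; these pairings are $L^{2}$-continuous in the first argument since $\bar h\wedge\omega^{n-2}$ and $\bar g\wedge\omega^{n-2}$ are square integrable. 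Hence $\int_{X}\alpha\wedge\bar\alpha\wedge\omega^{n-2}=\int_{X}h\wedge\bar h\wedge\omega^{n-2}=-(n-2)!\,\|h\|^{2}$ by part 1, and the hypothesis that this is $\geq 0$ forces $\|h\|=0$, so $h=0$ and $\alpha$ is orthogonal to the harmonic forms.

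It then remains to promote $\alpha=g\in\overline{\operatorname{im}d}$ to genuine $d$-exactness, and this is the step I expect to be the main obstacle. The natural route is to solve the Laplace equation $\Delta\gamma=\alpha$ and set $\beta=d^{*}\gamma$: since $\alpha$ is closed, pairing $d\alpha=0$ against $d\gamma$ shows the co-exact contribution $d^{*}d\gamma$ vanishes, leaving $\alpha=dd^{*}\gamma=d\beta$, with $\beta$ smooth by elliptic regularity. The delicate point is the \emph{honest} solvability of $\Delta\gamma=\alpha$, as opposed to mere membership of $\alpha$ in the closure of the range of $\Delta$; this is precisely where the completeness of $(X,\omega)$ and the compactness of the support of $\alpha$ must be exploited, and it is the technical heart of the statement.
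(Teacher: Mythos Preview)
Your treatment of part 1 and of the orthogonality claim in part 2 is correct and essentially the same as the paper's. The paper handles the cross term $\int g\wedge\bar h\wedge\omega^{n-2}$ a bit differently---noting that for a primitive harmonic $(1,1)$-form $h$ one has (up to a constant) $\bar h\wedge\omega^{n-2}=\star h$, so the integral is $\pm(g,\star h)$, which vanishes because $\star h$ is again harmonic and $g\perp\H$---while you use the approximation $g=\lim d\beta_j$ together with Stokes; both are fine. For $\int g\wedge\bar g\wedge\omega^{n-2}$ the paper first rewrites it as $\int\alpha\wedge\bar g\wedge\omega^{n-2}$ and exploits the compact support of $\alpha$ to justify the limit, whereas you approximate $g$ directly in the first slot; again both work.

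The only substantive divergence is the final exactness step. You propose to solve $\Delta\gamma=\alpha$ and set $\beta=d^{*}\gamma$, and you rightly flag solvability as the obstacle: without a spectral gap on $2$-forms, $\alpha\in\overline{\mathrm{im}\,\Delta}$ need not lie in $\mathrm{im}\,\Delta$, so this route may genuinely fail to close. The paper bypasses the Laplacian entirely: once $\alpha$ is a closed compactly supported form orthogonal to $\H^{1,1}_{(2)}$, it simply quotes the standard fact (L\"uck, \emph{$L^{2}$-invariants}, Lemma~1.92) that on a complete manifold such a form is automatically $d$-exact, with a smooth primitive that need be neither $L^{2}$ nor compactly supported. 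So the ``technical heart'' you anticipate is in fact a one-line citation, and the detour through $\Delta$ is unnecessary.
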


\begin{proof} see also \cite{Din2014} for $(1)$ and $(2)$.
For notational convenience, if $a,b$ are $(1,1)-$forms on $X$, one sets $\int a\wedge b:=\int_{X}a\wedge b\wedge \omega^{n-2}$.
\item[1)] Let $h\in \H^{1,1}_{(2)}(X)$ then $\Lambda h\in \H^{0}_{(2)}(X)$. However square integrable harmonic functions are identically nul on a complete manifold of infinite volume. This is a consequence of Gaffney theorem (\cite{Gaf}, \cite{AndVes} p.93, \cite{Rha} th.26). Hence $\Lambda h$ is vanishing. This prove that the form $h\wedge\bar h\wedge \omega^{n-2}$ is negative and $||h||^{2}=-h\wedge\bar h\wedge \omega^{n-2}$ (see \cite{Huy} sect. 1.2).
\item[2)] Let $\alpha=h+e$ be the decomposition of $\alpha$ in its harmonic part $h$ plus a form $e\in\Adh{\Ran(d)}={\Ker d^{*}}^{\perp}$.
Then $\int \alpha\wedge\bar\alpha=\int h\wedge \bar h+\int e\wedge \bar e+2\text{Re}\int e\wedge \bar h$.
But $\int e\wedge \bar h=\pm(e,\star h)=0$ for $\star h$ is harmonic if $h$ is. 
Also $\int e\wedge \bar e=\int \alpha\wedge \bar e=\lim_{i}\int \alpha\wedge \Adh{d\theta_{i}}$ for the form is compactly supported. Integrating by parts, one sees that $\int e\wedge \bar e=0$. Hence $-||h||^{2}=\int h\wedge \bar h\geq 0$.

 To conclude, we use that if a closed form with compact support is orthogonal to the $l^{2}-$harmonic forms then it is exact (see e.g. \cite{Luc} lemma 1.92).
%
\end{proof}
\subsection{Negativity of self-intersection of compact divisors}
As a corollary, we obtain a general result on self-intersection of a compact divisor in a complete K\"{a}hler manifold of infinite volume (see \cite{NapRam1998}, \cite{CamNeg} sect. 2).
\begin{corollary}\label{negativity}\item[1)] Let $X$ be a complete K\"{a}hler manifold of infinite volume. Let $C$ be a compact divisor. Then $C.C\leq 0$ and  if $C.C=0$ then the cohomology class $c_{1}([C])$ is vanishing in $H^{2}(X,\RR)$.
\item
\item[2)]Let $Y$ be a compact K\"{a}hler manifold. Let $i:C\to Y$ be a divisor in $Y$ such that $i_{*}\pi_{1}( C )$ is of infinite index in  $\pi_{1}(Y)$ and $C.C=0$. Then $c_{1}([C])\in H^{2}(Y,\ZZ)$ belongs to $H^{2}(\pi_{1}( Y ),\ZZ )$. Hence the pullback of this class to the universal covering is trivial.

\end{corollary}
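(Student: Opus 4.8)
The plan is to prove the two parts in turn, with Lemma \ref{Hodge index} doing the real work in 1) and its conclusion being transported to the universal cover in 2). For part 1), I would choose a smooth metric $h$ on $[C]$ whose Chern form $\alpha := c_1(h)$ has compact support; this is possible because $C$ is compact. Then $\alpha$ is a real closed compactly supported $(1,1)$-form with $C.C = \int_X \alpha \wedge \alpha \wedge \omega^{n-2}$. Decomposing $\alpha = h_0 + e$ exactly as in the proof of Lemma \ref{Hodge index} 2), the cross term and $\int e \wedge \bar e$ vanish by closedness together with compact support, so $C.C = \int_X h_0 \wedge \bar h_0 \wedge \omega^{n-2} = -\|h_0\|^2 \le 0$. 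If moreover $C.C = 0$ then $h_0 = 0$, i.e. $\alpha$ is orthogonal to the $l^2$-harmonic forms; being closed and compactly supported it is then exact (the Green operator furnishes a smooth primitive), whence $c_1([C]) = [\alpha] = 0$ in $H^2(X,\RR)$.

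For part 2), the idea is to view $C$ as a compact divisor inside an infinite-volume complete K\"ahler manifold and transport the conclusion of 1) to the universal cover. Let $p : X \to Y$ be the covering with $\pi_1(X) = i_* \pi_1(C)$, carrying the pulled-back K\"ahler metric; it is complete and of infinite volume since the index is infinite and $Y$ is compact. As recalled in the introduction, $i$ lifts to an embedding $s : C \to X$ whose image is a compact component of $p^{-1}(|C|)$ with $s(C).s(C) = C.C = 0$, and $s_* \pi_1(C) = \pi_1(X)$. By part 1), $c_1([s(C)]) = 0$ in $H^2(X,\RR)$: fixing a metric on $[C]$ over $Y$ with Chern form $\eta$ supported in a tubular neighborhood of $|C|$, the piece $\eta_0$ of $p^*\eta$ supported near $s(C)$ is closed and compactly supported, represents $c_1([s(C)])$, and is therefore exact, $\eta_0 = d\beta_0$ with $\beta_0$ smooth on $X$.

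The heart of the matter is to promote this to $\pi^* c_1([C]) = 0$ on the universal cover $\pi : \tilde Y \to Y$, factored as $\pi = p \circ q$. Here $\pi^*\eta = \sum_j (\pi^*\eta)_j$, with one closed compactly supported piece per connected component $\tilde C_j$ of $\pi^{-1}(|C|)$, and $[\pi^*\eta] = \pi^* c_1([C])$. Each piece is exact: the component $\tilde C_0 \subseteq q^{-1}(s(C))$ is connected because $s_* \pi_1(C) = \pi_1(X)$, so $(\pi^*\eta)_0 = q^*\eta_0 = d(q^*\beta_0)$, and since the deck group $\Gamma = \pi_1(Y)$ permutes the components transitively one gets $(\pi^*\eta)_j = d\lambda_j$ for every $j$. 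The main obstacle is that the family $\{\tilde C_j\}$ is infinite, so one may not simply add the primitives $\lambda_j$: the series need not converge and $\pi^*\eta$ is not square integrable, so neither termwise exactness nor $l^2$ Hodge theory on $\tilde Y$ applies directly. I would circumvent this by pairing against compactly supported cohomology: for any closed compactly supported $(2n-2)$-form $m$ on $\tilde Y$, only finitely many pieces $(\pi^*\eta)_j$ meet $\mathrm{supp}(m)$, so $\int_{\tilde Y} \pi^*\eta \wedge m = \sum_j \int_{\tilde Y} d\lambda_j \wedge m = 0$ by Stokes (each $\lambda_j \wedge m$ has compact support and $dm = 0$). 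As the Poincar\'e pairing $H^2(\tilde Y,\RR) \times H^{2n-2}_c(\tilde Y,\RR) \to \RR$ is perfect, this forces $\pi^* c_1([C]) = 0$ in $H^2(\tilde Y,\RR)$.

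It remains to pass to integral coefficients and name the class. Since $\tilde Y$ is simply connected, $H_1(\tilde Y,\ZZ) = 0$, hence $H^2(\tilde Y,\ZZ)$ is torsion free and the real vanishing improves to $\pi^* c_1([C]) = 0$ in $H^2(\tilde Y,\ZZ)$. Finally, the five-term exact sequence of the Cartan--Leray spectral sequence of $\tilde Y \to Y$, combined with $H^1(\tilde Y,\ZZ) = 0$, realizes the inflation $H^2(\pi_1(Y),\ZZ) \to H^2(Y,\ZZ)$ as an injection whose image is the kernel of $\pi^* : H^2(Y,\ZZ) \to H^2(\tilde Y,\ZZ)$. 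Therefore $c_1([C])$ lies in $H^2(\pi_1(Y),\ZZ)$, and the triviality of its pullback to the universal cover is exactly what was established along the way.
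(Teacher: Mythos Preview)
Your proof is correct and follows the same route as the paper: part 1) is a direct application of Lemma \ref{Hodge index}, and for part 2) one passes to an intermediate covering in which a lift of $C$ is compact, applies part 1) there, and transports the conclusion to the universal cover. The paper phrases the transport slightly differently---it uses a separate intermediate cover $X_i = X^u/\mathrm{Stab}(\tilde C_i)$ for each component $\tilde C_i$ of $\pi^{-1}(|C|)$, whereas you use a single cover together with the transitive deck-group action---but since the stabilisers are all conjugate to $i_*\pi_1(C)$ these are the same argument. Where you go further than the paper is in the passage from ``each piece $(\pi^*\eta)_j$ is exact'' to ``$[\pi^*\eta]=0$'': the paper simply asserts this, while your pairing against $H^{2n-2}_c(\tilde Y,\RR)$ is a clean way to handle the locally finite but globally infinite sum of primitives. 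Likewise your use of torsion-freeness of $H^2(\tilde Y,\ZZ)$ and of the five-term exact sequence makes explicit the upgrade to integral coefficients and the identification of $\ker\pi^*$ with $H^2(\pi_1(Y),\ZZ)$, both of which the paper leaves implicit.
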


\begin{proof}\item[1)] is a direct consequence of the lemma.
\item[2)]
 Let $\tilde C_{i}$ be a connected component of $\pi^{-1}( C )$. Let $f_{i}:X_{i}=  X^{u}/G_{i}\to Y$ be the quotient of the universal covering $X^{u}$ of $Y$ by the stabiliser of $\tilde C_{i}$. Then $ f_{i}: \tilde C/G_{i}=C_{i}\to C$ is a compact divisor in $X_{i}$ which is biholomorphic to $C$, so that $C_{i}.C_{i}=0$. This implies that $c_{1}([C_{i}])$ is trivial in $H^{2}(X_{i},\RR)$ hence also $c_{1}([\tilde C_{i}])=f_{i}^{-1}c_{1}([C_{i}])$. Therefore
 the cohomology class of $\pi^{*}(C)$ is trivial.
The  class $c_{1}([C])$ defines a class in $H^{2}(\Pi_{1}(Y),\ZZ)$ (cohomology classes on $Y$ which are exact on $X^{u}$). 
\end{proof}
We may compare this with the case of a fibration: let $f:Y\to W$ be a holomorphic fibration onto a curve of genus greater than one. Then $W$ is a $K(G,1)$, hence $\ZZ=H^{2}(W, \ZZ)\simeq H^{2}(\pi_{1}(W),\ZZ)$. We obtain a natural morphism $H^{2}(W, \ZZ)\to H^{2}(\Pi_{1}(Y),\ZZ)$. It maps the cohomology of a point $p\in W$, to the cohomology of the fiber $f^{-1}( p )$ in $Y$. 
\begin{corollary}[Weak Lefschetz]  Let $Y$ be a compact complex K\"{a}hler manifold. Let $i:C\to Y$ be a divisor in $Y$ such that $C.C>0$. Then the index of $i_{*}\pi_{1}(|C|)$ in $\pi_{1}(Y)$ is finite.
\end{corollary}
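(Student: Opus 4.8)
The plan is to prove the contrapositive by reducing to Corollary \ref{negativity}(1), applied to the covering of $Y$ in which $C$ lifts. So I would suppose, for contradiction, that $H := i_*\pi_1(|C|)$ has infinite index in $\pi_1(Y)$, and produce from this a compact divisor of positive self-intersection in a complete K\"ahler manifold of infinite volume.

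First I would introduce the covering $p : X \to Y$ associated with the subgroup $H$. By the lifting criterion of covering space theory, the condition $i_*\pi_1(|C|)\subseteq H$ guarantees that $i$ lifts to a map $s : |C| \to X$ with $p\circ s = i$. Since $|C|$ is compact and $s$ is a section of $p$ over $i(|C|)$, the image $\tilde C := s(|C|)$ is a compact analytic subset of $X$, and $p$ restricts to a biholomorphism of $\tilde C$ onto $|C|$; as $p$ is \'etale, $\tilde C$ carries the structure of a compact divisor biholomorphic to $C$. Next I would equip $X$ with the pulled-back K\"ahler metric $\omega_X = p^*\omega_Y$, so that $(X,\omega_X)$ is a complete K\"ahler manifold, being a covering of the compact manifold $Y$ with the pull-back structure (as noted in the preliminaries, such a covering is of bounded geometry, hence complete). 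The infinite-index hypothesis means the covering has infinitely many sheets, namely $\mathrm{vol}(X) = [\pi_1(Y):H]\cdot\mathrm{vol}(Y) = +\infty$, so $X$ is non-compact of infinite volume. Thus $X$ satisfies the hypotheses of Corollary \ref{negativity}(1), and the compact divisor $\tilde C$ obeys $\tilde C.\tilde C \le 0$.

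The remaining, and only delicate, step is to identify the two self-intersection numbers, $\tilde C.\tilde C = C.C$. Because $p$ restricts to a biholomorphism from a neighborhood of $\tilde C$ onto a neighborhood of $|C|$, one may choose the smooth metric $h$ on $[\tilde C]$ to be the pullback under $p$ of a metric on $[C]$ whose Chern form is supported near $|C|$; then $c_1(h)$ is supported near the single lifted copy $\tilde C$, and the defining integral (\ref{self-intersection}) over $X$ coincides, via $p$, with the corresponding integral over $Y$. This is exactly the invariance of self-intersection under biholomorphism of compact divisors already used in the proof of Corollary \ref{negativity}(2), where a biholomorphic compact divisor $C_i$ is asserted to satisfy $C_i.C_i = C.C$. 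Combining these facts gives $C.C = \tilde C.\tilde C \le 0$, contradicting $C.C > 0$; hence the index of $i_*\pi_1(|C|)$ in $\pi_1(Y)$ is finite.

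I expect the main obstacle to be the self-intersection identification: one must be careful that the Chern form representing $c_1([\tilde C])$ can be taken compactly supported near the one lifted component $\tilde C$, and not near all of $p^{-1}(|C|)$, since $[\tilde C]$ is only a single summand of the pullback $p^*[C]=\O_X(p^*C)$. It is precisely the locality of the K\"ahler metric and of the Chern form under the \'etale map $p$ that makes the push-forward of the integral to $Y$ legitimate and yields the equality $\tilde C.\tilde C = C.C$. Everything else (existence of the lift, completeness, infinite volume) is a formal consequence of the infinite-index assumption.
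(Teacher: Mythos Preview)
Your proposal is correct and follows exactly the route the paper intends: the corollary is stated without proof because it is the contrapositive of Corollary~\ref{negativity}(1), applied to the lift of $C$ in the covering associated with $i_*\pi_1(|C|)$, precisely as in the proof of Corollary~\ref{negativity}(2). Your discussion of the self-intersection identification (choosing a Chern form supported near $|C|$ and using that $p$ is a biholomorphism on the connected component of the preimage of a retracting neighborhood) is the right justification and matches the paper's implicit reasoning.
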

\section{The case of a compact divisor in an infinite Galois covering.}
\subsection{Notations and definitions.}
Let $p:X\to Y$ be a covering. In general, the space of square integrable sections of a pull-back bundle will be defined through the pull-back metrics. 
\begin{trivlist}
\item[a)]
Let $\Gamma$ be the deck transformation group of the covering $p:X\to Y$. Then $\CC[\Gamma]$ acts on $l^{2}(X)$ by a representation $\lambda$ such that $\lambda(\sum_{g\in \Gamma}a_{g}\delta_{g}).(F)(x)=\sum_{g\in \Gamma}a_{g}F(g^{-1}x)$. 
The weak closure (or the bicommutant) of $\lambda(\CC[\Gamma])$ is given by the action of the Von Neumann algebra $\VN(\Gamma)$ of the discret group $\Gamma$. 

\item[b)] An element $\lambda_{s}(f)\in \VN(\Gamma)$ is caracterised by its value $f=\lambda_{s}(f)(\delta_{e})\in l^{2}(\Gamma)$ on the Dirac mass at the neutral element $\delta_{e}$. Then $\lambda_{s}(f): l^{2}(\Gamma)\to l^{2}(\Gamma)$ is given by the left convolution of $f$ on $l^{2}(\Gamma)$ and $\lambda(f): l^{2}(X)\to l^{2}(X)$ is defined through the weak limit $\sum_{g\in \Gamma}f(g)\lambda(\delta_{g}).F$ in $l^{2}(X)$.
\item[c)]By definition, an element of $\VN(\Gamma)$ is a weak isomorphism (or is almost invertible) if it is injective with dense range. The fact that $\VN(\Gamma)$ admits a finite trace implies that it is enough to be injective or with dense range.

\end{trivlist}
We will use the Von Neumann algebra $\VN(\Gamma)$ through the following lemma (\cite{Din2013} cor.3.4.6): 
\begin{lemma}[A Galois $\partial\dbar-$lemma]\label{A ddbar-lemma} Let $p:X\to Y$ be a Galois covering of a compact K\"{a}hler manifold. Let $\alpha$ be a $d-$closed square integrable $(p,q)-$form on $X$ which is orthogonal to harmonic forms. Then there exists a weak isomorphism $r\in \VN(\Gamma)$, there exists a square integrable form $\beta$ on $X$ such that $\lambda(r).\alpha=\partial\dbar \beta$. 
\end{lemma}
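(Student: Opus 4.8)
The plan is to run the classical compact-K\"ahler $\partial\dbar$-lemma argument in the $L^{2}$ category over the covering $X$, using $\Gamma$-equivariant Hodge theory, and then to repair the single place where completeness fails to produce closed ranges by means of an element of $\VN(\Gamma)$. First I would record that, $X$ being a covering of a compact K\"ahler manifold, it is complete of bounded geometry, so by Gaffney's theorem \cite{Gaf} the associated Laplacians are essentially self-adjoint and the $L^{2}$ Hodge--Kodaira decompositions hold: $L^{2}\Omega^{p,q}(X)=\H^{p,q}\oplus\overline{\Im\bar\partial}\oplus\overline{\Im\bar\partial^{*}}$, and likewise for $\partial$ and $d$; the K\"ahler identities and the equalities $\Delta_{d}=2\Delta_{\partial}=2\Delta_{\bar\partial}$ survive because the metric is pulled back from $Y$. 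Since the deck transformations are holomorphic isometries, all these operators commute with the $\Gamma$-action, so $L^{2}\Omega^{p,q}(X)$ is a Hilbert $\VN(\Gamma)$-module and $\Delta,d,\partial,\bar\partial$ (together with the spectral projections of $\Delta$) are morphisms of such modules; the finite trace on $\VN(\Gamma)$ supplies a dimension theory and, as recorded in item c) above, makes injectivity equivalent to dense range.

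Next I would reproduce the compact proof formally. Because $\alpha$ is $d$-closed and orthogonal to $\H$, the Hodge decomposition for $d$ gives $\alpha=dd^{*}G_{d}\alpha$, and the usual bidegree bookkeeping, together with the commutation of $G_{d}$ with $\partial,\bar\partial,\partial^{*},\bar\partial^{*}$ coming from the K\"ahler identities, rewrites this as $\alpha=\partial\dbar\gamma$, where $\gamma$ is obtained by applying the bounded operators $\partial^{*},\bar\partial^{*}$ and the harmonic projections to $G_{d}\alpha$. The one and only obstruction is that on $X$ the Green operator $G_{d}=\Delta_{d}^{-1}$ is unbounded, since $0$ may lie in the continuous spectrum of $\Delta_{d}$, so $G_{d}\alpha$, and hence $\gamma$, need not be square integrable; equivalently, $\Im(\partial\dbar)$ is no longer closed and the computation only yields $\alpha\in\overline{\Im(\partial\dbar)}$. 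Thus the lemma reduces to the following purely operator-theoretic statement: given $\alpha\in\overline{\Im(\partial\dbar)}$ orthogonal to the harmonic forms, produce a weak isomorphism $r\in\VN(\Gamma)$ with $\lambda(r)\alpha\in\Im(\partial\dbar)$.

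The heart of the matter is this last reduction, and here the finiteness of the trace on $\VN(\Gamma)$ does the work. Since $\lambda(r)$ commutes with $G_{d},\partial,\bar\partial$ for every $r\in\VN(\Gamma)$, applying the formula of the previous step to $\lambda(r)\alpha$ gives $\lambda(r)\alpha=\partial\dbar(\lambda(r)\gamma)$, so it suffices to choose $r$ injective with dense range (hence a weak isomorphism, by finiteness of the trace) for which $\lambda(r)\gamma$ is square integrable, i.e. $r$ absorbs the failure of $G_{d}\alpha$ to be $L^{2}$. The mechanism is the von Neumann density/weak-isomorphism theorem for morphisms of Hilbert $\VN(\Gamma)$-modules, in the spirit of the dimension theory of \cite{Luc}: the inclusion $\Im(\partial\dbar)\hookrightarrow\overline{\Im(\partial\dbar)}$ has cokernel of $\Gamma$-dimension zero, and a vector of the closure can be pulled back into the image by a weak isomorphism of the module, which, the module being induced from the regular representation, is implemented by an element of $\VN(\Gamma)$. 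I expect the main obstacle to be precisely this last point: the corrector $r$ must lie in the group algebra $\VN(\Gamma)$ and not merely in the commutant, where the spectral calculus of $\Delta$ naturally sits, so one cannot simply take $r=f(\Delta)$. Arranging the weak isomorphism to be $\Gamma$-equivariant in the correct sense, and checking injectivity together with dense range against the behaviour of the spectral measure of $\alpha$ near $0$, is where the finiteness of the trace is indispensable.
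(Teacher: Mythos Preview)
The paper does not prove this lemma at all: it is simply quoted from \cite{Din2013}, cor.~3.4.6, so there is no proof here to compare against. Your outline is structurally sound and matches what one expects the argument in \cite{Din2013} to look like: the K\"ahler identities survive on the covering, the compact $\partial\dbar$-lemma runs formally and shows $\alpha\in\overline{\Im(\partial\dbar)}$, and the only issue is the possible non-closedness of $\Im(\partial\dbar)$.

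That said, your write-up stops exactly at the hard step, and you say so yourself. The assertion that ``a vector of the closure can be pulled back into the image by a weak isomorphism \emph{implemented by an element of $\VN(\Gamma)$}'' is the entire content of the lemma once the reduction is made, and you do not prove it; pointing to the dimension theory in \cite{Luc} is not enough, because that theory produces morphisms in the commutant, which is precisely the obstacle you flag. The mechanism that actually closes this gap is the passage to the algebra $\U(\Gamma)$ of operators affiliated to $\VN(\Gamma)$: since $\VN(\Gamma)$ has a finite faithful trace, it satisfies the Ore condition with respect to weak isomorphisms, and $\U(\Gamma)$ is von Neumann regular. After tensoring with $\U(\Gamma)$, images of $\Gamma$-equivariant maps become direct summands, so $\alpha$ becomes genuinely $\partial\dbar$-exact over $\U(\Gamma)$; clearing the denominator of the resulting primitive yields the weak isomorphism $r\in\VN(\Gamma)$ (and here one uses that $\partial\dbar$ commutes with $\lambda(r)$, so the denominator can be moved across). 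Without this localization argument, or an equivalent device, your sketch remains a correct reduction with the decisive step missing.
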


\subsection{Factorisation in the Galois case}

\begin{theorem}\label{galois} Let $p: X\to Y$ be an infinite Galois covering of a compact K\"{a}hler manifold. Let $ C$ be a compact divisor in $ X$ such that $C.C=\int_{X}c_{1}([C])^{2}\wedge\omega^{n-2}=0$. Then
\item[1)] there exists a proper map $f': X\to B'$ to a curve such that $C$ is a fiber (up to multiplicity). In particular $X$ is holomorphically convex. 
\item[2)] Moreover there exists a holomorphic map $f:Y\to B$ to a curve such that  $p_{*}(C)$ is a  fiber of $f$ (up to multiplicity).
\end{theorem}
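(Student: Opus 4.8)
The plan is to solve the Poincaré–Lelong equation on the Galois cover $X$ using the two tools already assembled: the vanishing of the Chern class (Corollary \ref{negativity}, part 1) and the Galois $\partial\dbar$–lemma (Lemma \ref{A ddbar-lemma}). First I would choose a smooth metric $h$ on the line bundle $[C]$ whose Chern form $\alpha=c_1(h)$ has compact support (this is possible since $C$ is compact). Since $C.C=0$, Corollary \ref{negativity}(1) gives that the cohomology class $c_1([C])$ vanishes in $H^2(X,\RR)$; combined with Lemma \ref{Hodge index}(2) — noting that the positivity hypothesis $\int_X\alpha\wedge\bar\alpha\wedge\omega^{n-2}\ge 0$ holds because the integral equals $C.C=0$ — the form $\alpha$ is orthogonal to the $l^2$–harmonic forms and is $d$–exact.

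**Next I would** feed $\alpha$ into the Galois $\partial\dbar$–lemma. The form $\alpha$ is a closed square-integrable (in fact compactly supported) $(1,1)$–form orthogonal to harmonic forms, so Lemma \ref{A ddbar-lemma} produces a weak isomorphism $r\in\VN(\Gamma)$ and a square-integrable form $\beta$ with $\lambda(r).\alpha=\partial\dbar\beta$. Writing $u$ for the relevant $(0,0)$–primitive extracted from $\beta$, this is exactly a (twisted) solution of the Poincaré–Lelong equation $i\partial\dbar u = \lambda(r).[C]$ in the sense of currents. The point of the weak isomorphism $r$ is that it is almost invertible, so the equation still pins down a genuine multivalued holomorphic datum: the $(1,0)$–form $\eta=\partial u$ is $d$–closed with $\dbar\eta=\lambda(r).\alpha$, and away from $|C|$ it is a closed holomorphic $1$–form, while near $|C|$ it has logarithmic poles with positive periods along $C$.

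**The hard part will be** passing from the logarithmic $1$–form $\eta$ to an honest local holomorphic fibration, because the weak isomorphism $r$ only gives the equation up to the action of $\VN(\Gamma)$ rather than literally, so one must verify that the periods of $\eta$ around $|C|$ are still commensurable and that $\exp(\int\eta)$ is genuinely single-valued (up to a finite factor) on a neighborhood $W$ of $|C|$. Concretely I would integrate $\eta$ to build a holomorphic function $g=\exp\!\big(c\int\eta\big)$ on a small $W$, choosing the constant $c$ so that $g$ extends holomorphically across $|C|$ with $g^{-1}(0)=|C|$ as sets; the foliation defined by $\eta$ then has $|C|$ as a compact leaf (a fiber) of the map $f'=g:W\to\Delta$. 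Once such a local proper fibration with $f'^{-1}(0)=C$ is in hand, part (1) follows immediately from Theorem \ref{deformation}(1), since $X$ — a covering of a compact Kähler manifold with the pull-back metric — is of bounded geometry; in particular $X$ is holomorphically convex.

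**Finally, for** part (2), I would descend the fibration to $Y$. The divisor $C\subset X$ is (a connected component of) the preimage of its image $D_0=p(|C|)\subset Y$, and $D_0.D_0=0$ since intersection numbers are invariant under étale covering. Writing $C$, or rather the primitive divisor $D$ beneath it, as the minimal effective piece with vanishing self-intersection, the existence of the non-constant holomorphic function $g$ on a neighborhood of $C$ supplies exactly the hypothesis of Theorem \ref{deformation}(2): a compact connected component of $p^{-1}(|D_0|)$ admits a non-constant holomorphic function nearby. That theorem then yields a holomorphic map $b=f:Y\to B$ to a curve for which a multiple of $D_0=p_*(C)$ is a fiber, completing the proof.
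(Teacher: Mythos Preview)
Your outline tracks the paper's strategy through the first two steps (compactly supported Chern form, Hodge index to get orthogonality, Galois $\partial\dbar$--lemma), but the step you flag as ``the hard part'' contains a genuine gap, and your proposed resolution does not work.

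After the $\partial\dbar$--lemma you have $\lambda(r).\alpha=dd^{c}u$, hence $dd^{c}\bigl(\lambda(r).\ln|s|^{2}+u\bigr)=\lambda(r).C$. The logarithmic form $\eta=\partial\bigl(\lambda(r).\ln|s|^{2}+u\bigr)$ is therefore holomorphic not on $X\setminus|C|$ but on $X\setminus\Gamma.|C|$, and its residues along the (infinitely many) components of the orbit $\Gamma.|C|$ are the coefficients of $r\in\VN(\Gamma)$ acting on $[C]$. These are \emph{a priori} arbitrary real numbers in $l^{2}(\Gamma)$; there is no reason for them to be rationally commensurable, so $\exp\bigl(c\!\int\eta\bigr)$ will not be single-valued for any choice of $c$. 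Your claim that one can ``choose $c$ so that $g$ extends holomorphically across $|C|$'' therefore fails: even on a small neighborhood $W$ of $|C|$, the form $\eta$ already sees several $\Gamma$--translates of $|C|$ with incommensurable residues.

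The paper circumvents this entirely. Instead of trying to exponentiate $\eta$, it uses the $\Gamma$--action: translating $\eta$ by deck transformations yields an \emph{infinite-dimensional} space $A$ of closed square-integrable logarithmic $1$--forms. For any finite collection $C_{1},\ldots,C_{k}$ of compact components of $p^{-1}(|C_{0}|)$, the residue map $A\to\CC^{k}$ has nontrivial kernel, producing a form with no poles along $C_{1},\ldots,C_{k}$ --- i.e.\ a genuine closed \emph{holomorphic} $1$--form on a neighborhood of $C_{1}\cup\cdots\cup C_{k}$. Iterating, one gets an infinite-dimensional space of such holomorphic forms, and a second dimension count (against $H_{1}$ of a relatively compact domain) then yields an exact form, hence a non-constant holomorphic function near $|C|$. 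Only then do Lemma~\ref{fibre} and Theorem~\ref{deformation} apply. This residue-killing argument via the infinite Galois group is the missing idea in your proposal.

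A smaller point on part~(2): you assert parenthetically that $C$ is a connected component of $p^{-1}p(|C|)$, but this is not automatic and is exactly what the paper proves first (using that $c_{1}(C)=du$ is exact to show $T.C=0$ for any other compact component $T$ meeting $C$). Without this, the hypothesis of Theorem~\ref{deformation}(2) is not verified.
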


\begin{proof} From \ref{negativity}, we may assume $C$ connected and effective.
\item[a)]We first prove that $1)$ implies $2)$. Using theorem \ref{deformation} (2), it is enough to prove that  $|C|$ is a connected componant of $p^{-1}p(|C|)$:  if not, another irreducible component $T$ in the connected component of $p^{-1}p(C)$ which contains $C$ is compact for $p$ is Galois. If $T$ intersects $C$ then $T.C>0$. However the Hodge index theorem  and $C.C=0$ implies that $c_{1}(C)=du$ is exact. But $c_{1}(T)$ is closed with compact support,
hence $$T.C=\int_{X}c_{1}(T)\wedge du\wedge\omega^{n-2}=0.$$ 
\item[b)]
We may assume that $C$ is not a multiple of an effective divisor, and that $C$ is not the sum of non trivial compact effective divisors with vanishing self-intersection. Let $W$ be a small connected neighborhood of $p(|C|)$ such that $W'$, the connected component of $p^{-1}(W)$ which contains $ C$, satisfies $p^{-1}p(C)\cap W'=C$. Then $p'=p_{|W'}:W'\to W$ is a finite covering. Hence the direct image $p_{*}(C)$ is a divisor with vanishing self-intersection. Zariski's lemma \ref{Zariski's lemma} implies there exists a minimal effective divisor $C_{0}$ with support $p(|C|)$ such that $C_{0}.C_{0}=0$. Then $p'^{*}(C_{0})=C$.

Let $s$ be a section of $[C]$ such that $(s)=C$.
We fix a metric on the line bundle $[C]$ such that $|s|=1$ outside a given compact neighborhood of $C$. Using a pull-back metric by $p'$,
we may even assume that $|s|$ is invariant by the stabiliser subgroup of $C$. 

Let $\alpha$ be the metric first Chern form of $[C]$. The Poincar\'{e}-Lelong equation reads $$dd^{c}\ln|s|^{2}=C-\alpha\,.$$

Lemma \ref{Hodge index} proves that $\alpha$ is orthogonal to the $l^{2}-$harmonic forms for $0=C.C=\int_{ X}\alpha\wedge\alpha$.
Let $\Gamma$ be the deck transformations group of $p:X\to Y$. The Galois $\partial\dbar-$lemma \ref{A ddbar-lemma} implies that there exists a weak isomorphism $\lambda(f)\in \VN(\Gamma)$ and a function $u$ square integrable in the domain of $d$ and $dd^{c}$, such that: $$\lambda(f).\alpha=dd^{c}u\,.$$
   
 The function $\ln |s|^{2}$ has compact support, hence $\lambda(f).\ln|s|^{2}$ is well defined. We obtain the following equation (the action of the Von Neumann algebra on a current with compact support is defined by duality):
\begin{eqnarray} 
\lambda(f).C&=&\lambda(f).\left( dd^{c}\ln|s|^{2}+\alpha\right) \\
                       &=&dd^{c} \left( \lambda(f).\ln|s|^{2}+u \right)
\end{eqnarray}
This proves that the function $P= \left( \lambda(f).\ln|s|^{2}+u \right)$ is pluriharmonic on $ X\setminus \Gamma. |C|$ with a logarithmic pole along $\Gamma. | C |$  ($P$ extends has  a difference of two plurisubharmonic functions for $f$ is not positive in general).

Note that $d^{c}P$ is square integrable in the fiber of $j^{*}p=p_{| X\setminus\Gamma| C|}$ and is closed over $Y\setminus |C_{0}|$. It defines a non trivial class of weight two in $\HH^{1}(Y\setminus| C_{0}|,p_{*(2)}\CC)$  for its residue along $p^{-1}( |C_{0}| )$ is $\lambda(f).[ C]$, which is a non vanishing current (for $\lambda(f)$ is injective). The degeneracy of $\U(\Gamma)-$spectral sequence implies that the $(1,0)-$part of this class is represented by a square integrable logarithmic one form $L$. From \cite{Din2013},\cite{Din2014}, we know that $L$ is closed. 

In our situation, the form $\partial P=\partial  \left( \lambda(f).\ln|s|^{2}+u \right)$ is a representative of the $(1,0)-$part of $L$: it is a logarithmic one form which is closed on $ X\setminus p^{-1}(|C_{0}|)$ and square integrable as a section of $p^{*}\dlog{1}{|C_{0}|}$.

Using translation, one sees that the space $A$ of closed square integrable logarithmic one forms is infinite dimensional for it is non trivial. Hence, for any finite number of compact connected component $C_{1},\ldots,\, C_{k}$ of $p^{-1}( |C_{0}| )$, the linear map $\oplus_{i=1}^{k} Res_{C_{i}}:A\to \oplus_{i=1}^{k}\CC$ has a non trivial kernel.
 A logarithmic meromorphic form with vanishing residue on $C_{1},\ldots,\, C_{k}$ is holomorphic. This form is therefore a closed holomorphic one form on $( X\setminus p^{-1}( |C_{0}| ))\cup   C_{1}\ldots\cup C_{k}$.

\begin{lemma} Let $X'$ be a complex manifold. Assume that a space $\Lambda$ of closed holomorphic one forms is infinite dimensional. Then for any compact subset $K$, there exists a non constant holomorphic function in a neighborhood of $K$ whose derivative lies in $\Lambda$.
\end{lemma}
\begin{proof}  $X'$ is exhausted by relatively compact domains $U$ with smooth boundaries. Hence $H_{1}(\bar U)$ is finite dimensional and the natural map given by integration $\Lambda\to H_{1}(\bar U)^{*}$ has a non trivial kernel. A closed holomorphic one form with  vanishing period is exact, on $\bar U$. Its primitive is $\dbar-$closed.
\end{proof}

We conclude the proof of the theorem using lemma \ref{fibre} and theorem \ref{deformation} $(1)$ and $(2)$.

\end{proof}

\begin{corollary} Let $i:C\to Y$ be a divisor in a compact K\"{a}hler manifold. Assume that a subgroup of finite index of $i_{*}\pi_{1}(|C|)$ has a normal closure of infinite index in $\pi_{1}(Y)$ and that $C.C=0$. Then each connected component of the support of $C$ is a fiber of a holomorphic map to a curve.
\end{corollary}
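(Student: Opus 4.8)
The plan is to manufacture an infinite Galois covering of $Y$ in which every connected component of $|C|$ lifts to a \emph{compact} divisor of vanishing self-intersection, and then to quote Theorem \ref{galois}. Set $G=\pi_{1}(Y)$ and let $H_{0}\leq i_{*}\pi_{1}(|C|)$ be the finite index subgroup whose normal closure $N\trianglelefteq G$ satisfies $[G:N]=+\infty$. Let $p:X\to Y$ be the covering associated with $N$; it is Galois with deck group $\Gamma=G/N$, which is infinite, so that $X$, equipped with the pull-back metric, is a complete K\"{a}hler manifold of infinite volume. We may assume $C$ effective; write $|C|=\bigsqcup_{j}Z_{j}$ for its decomposition into connected components and $C=\sum_{j}C^{(j)}$ with $|C^{(j)}|=Z_{j}$. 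Since distinct components are disjoint, all cross terms $C^{(j)}.C^{(k)}$ ($j\neq k$) vanish, and hence $0=C.C=\sum_{j}C^{(j)}.C^{(j)}$.

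First I would check that each $Z_{j}$ lifts to a compact divisor. Let $H_{j}=i_{*}\pi_{1}(Z_{j})\leq G$. The connected components of $p^{-1}(Z_{j})$ correspond to the orbits of $H_{j}$ on the fibre $G/N$, and the component through a point over the coset $gN$ is a covering of $Z_{j}$ whose group is the stabiliser $H_{j}\cap gNg^{-1}=H_{j}\cap N$ (using that $N$ is normal). Because $H_{0}$ has finite index in $i_{*}\pi_{1}(|C|)$, the subgroup $H_{0}\cap H_{j}$ has finite index in $H_{j}$; as $H_{0}\subseteq N$, the stabiliser $H_{j}\cap N$ contains $H_{0}\cap H_{j}$ and therefore has finite index $d_{j}:=[H_{j}:H_{j}\cap N]<+\infty$ in $H_{j}$. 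Consequently each component of $p^{-1}(Z_{j})$ is a finite covering of $Z_{j}$, hence compact; fix one such component $T_{j}$ and give it the divisor structure $C_{T_{j}}$ inherited from $p^{*}C^{(j)}$.

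The crux is to show $C_{T_{j}}.C_{T_{j}}=0$. Writing the self-intersection via Poincar\'{e}--Lelong as an integral over the divisor, $C^{(j)}.C^{(j)}=\int_{Z_{j}}c_{1}([C^{(j)}])\wedge\omega^{n-2}$, and using that $p$ is \'{e}tale (so $c_{1}([C_{T_{j}}])|_{T_{j}}=(p|_{T_{j}})^{*}(c_{1}([C^{(j)}])|_{Z_{j}})$ and $\omega_{X}=p^{*}\omega$), the degree-$d_{j}$ covering $p|_{T_{j}}:T_{j}\to Z_{j}$ yields $C_{T_{j}}.C_{T_{j}}=d_{j}\,C^{(j)}.C^{(j)}$. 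Now Corollary \ref{negativity}(1), applied to the compact divisor $T_{j}$ in the infinite-volume manifold $X$, gives $C_{T_{j}}.C_{T_{j}}\leq 0$, so $C^{(j)}.C^{(j)}\leq 0$ for every $j$. Combined with $\sum_{j}C^{(j)}.C^{(j)}=0$, this forces each $C^{(j)}.C^{(j)}=0$, and therefore $C_{T_{j}}.C_{T_{j}}=0$.

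Finally, each $C_{T_{j}}$ is a compact divisor with vanishing self-intersection in the infinite Galois covering $X$, so Theorem \ref{galois} applies: it produces a proper map $f'_{j}:X\to B'_{j}$ having $C_{T_{j}}$ as a fibre (up to multiplicity), whence $X$ is holomorphically convex, together with a holomorphic map $f_{j}:Y\to B_{j}$ to a curve for which $p_{*}(C_{T_{j}})$ is a fibre up to multiplicity. Since $p(T_{j})=Z_{j}$, the push-forward $p_{*}(C_{T_{j}})$ is supported on $Z_{j}$, so $Z_{j}$ is (the reduction of) a fibre of $f_{j}$. Carrying this out for each $j$ exhibits every connected component of $|C|$ as a fibre of a holomorphic map to a curve. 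The main obstacle is the self-intersection step: one must transport the infinite-volume negativity of Corollary \ref{negativity} down to each individual connected piece via the covering and then play it against the global identity $C.C=0$ to obtain the sharp vanishing required by Theorem \ref{galois}.
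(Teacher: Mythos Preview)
Your argument is correct and follows exactly the route sketched in the paper's own proof: build the infinite Galois cover associated to the normal closure $N$, observe that every connected component of $p^{-1}(|C|)$ is compact because $H_{0}\subset N$ forces each $H_{j}\cap N$ to have finite index in $H_{j}$, use the negativity statement of Corollary~\ref{negativity} together with $\sum_{j}C^{(j)}.C^{(j)}=0$ to get vanishing self-intersection for each lifted component, and then invoke Theorem~\ref{galois}. The paper compresses all of this into two sentences (``Hypothesis implies \ldots\ We conclude using the previous theorem and the Hodge index theorem~\ref{negativity}''), but the content is identical to what you wrote out.
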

\begin{proof}Hypothesis implies that there exists an infinite Galois covering $p:X\to Y$ such that each connected componant of $p^{-1}(|C|)$ is compact. We conclude using the previous theorem and the Hodge index theorem \ref{negativity}.
\end{proof}

\begin{corollary}[Campana] Let $i:C\to Y$ be a curve with vanishing self-intersection in a surface and assume that $i_{*}\pi_{1}(C)$ is finite. Then the universal covering of $X$ is holomorphically convex.
\end{corollary}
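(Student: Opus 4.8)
The plan is to pass to the universal covering of $Y$ and apply the Galois factorisation theorem \ref{galois}. Write $p:\widetilde Y\to Y$ for the universal covering; it is a Galois covering with deck group $\pi_1(Y)$. When $i_*\pi_1(|C|)$ is finite the intermediate cover $X$ of $Y$ associated with $i_*\pi_1(|C|)$ has finite fundamental group, so its universal covering coincides with $\widetilde Y$; thus ``the universal covering of $X$'' and of $Y$ denote the same space. If $\pi_1(Y)$ is finite then $\widetilde Y$ is compact, hence trivially holomorphically convex and there is nothing to prove, so I may assume $\pi_1(Y)$ infinite. Endowing $\widetilde Y$ with the pulled-back K\"ahler metric makes it a complete K\"ahler surface of infinite volume and makes $p$ an infinite Galois covering of the compact K\"ahler manifold $Y$.

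First I would examine the preimage $p^{-1}(|C|)$. Fix a connected component $\widetilde C$ of $p^{-1}(|C|)$. Since $p$ is the universal covering, the restriction $p|_{\widetilde C}:\widetilde C\to|C|$ is the covering of $|C|$ whose monodromy is the action of $\pi_1(|C|)$ on a fibre through $i_*$; the component $\widetilde C$ corresponds to an orbit with stabiliser $\mathrm{Ker}\bigl(i_*:\pi_1(|C|)\to\pi_1(Y)\bigr)$, so $p|_{\widetilde C}$ is a covering with exactly $\sharp\,i_*\pi_1(|C|)$ sheets. By hypothesis this number is finite, hence $p|_{\widetilde C}$ is a finite unramified covering and $\widetilde C$ is \emph{compact}. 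This is the step I expect to demand the most care: one must run the monodromy description of the components of a pulled-back covering carefully enough to see that finiteness of $i_*\pi_1(|C|)$ is precisely what makes every component of $p^{-1}(|C|)$ compact.

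Next I would compute the self-intersection of $\widetilde C$ inside $\widetilde Y$. As $p$ is \'etale, near $\widetilde C$ the line bundle $[\widetilde C]$ is the pullback $(p|_{\widetilde C})^{*}[C]$, and $p|_{\widetilde C}$ has the finite degree $d=\sharp\,i_*\pi_1(|C|)$. Pulling back a metric on $[C]$ whose Chern form is supported near $|C|$ yields a metric on $[\widetilde C]$ with compactly supported Chern form; by the definition (\ref{self-intersection}) and Poincar\'e--Lelong the self-intersection localises to $\widetilde C$ and is multiplied by $d$, giving
$$\widetilde C.\widetilde C=d\cdot(C.C)=0.$$
Thus $\widetilde C$ is a compact divisor with vanishing self-intersection in the infinite Galois covering $\widetilde Y$.

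Finally I would apply Theorem \ref{galois} to the pair $(\widetilde Y,\widetilde C)$: it produces a proper holomorphic map $f':\widetilde Y\to B'$ to a curve having $\widetilde C$ as a fibre (up to multiplicity) and, in particular, asserts that $\widetilde Y$ is holomorphically convex. This is exactly the conclusion of the corollary. If $|C|$ is disconnected one simply runs the argument on a single connected component, since one proper map to a curve already forces holomorphic convexity; no further input is needed. Apart from the covering-space bookkeeping of the second paragraph, the proof is a direct citation of Theorem \ref{galois}, the hypothesis $C.C=0$ being transported to $\widetilde C.\widetilde C=0$ by the elementary degree computation above.
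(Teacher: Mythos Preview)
Your proposal is correct and is precisely the argument the paper intends: the corollary is stated without proof, immediately after Theorem \ref{galois} and the preceding corollary, as a direct consequence of applying Theorem \ref{galois} to the universal covering. Your covering-space bookkeeping (components of $p^{-1}(|C|)$ are finite covers of $|C|$ of degree $\sharp\, i_{*}\pi_{1}(|C|)$, hence compact) and the degree computation $\widetilde C.\widetilde C=d\cdot C.C=0$ are exactly the verifications needed to feed $\widetilde C$ into Theorem \ref{galois}; the case $\pi_{1}(Y)$ finite is correctly disposed of separately.
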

\section{Manifolds of bounded geometry with a proper Green function.}


In this section we give a proof using potential theory, following the theory developped in  \cite{Ram1996}, \cite{NapRam1995}, \cite{NapRam2001}. we will however restrict ourselves to a non compact covering $(X,\omega)$ of a compact K\"{a}hler manifold. 
\subsection{Solving the $\partial\dbar-$equation from the $\Delta-$equation.}
\begin{lemma}
 Let $( X,\omega)$ be a complete K\"{a}hler manifold  which admits a Green function. Let $\alpha$ be a closed $(1,1)-$form with compact support such that $\int_{X}\alpha\wedge \bar\alpha\wedge\omega^{n-2}=0$ then $\alpha$ is $\partial\dbar-$exact. Moreover if the Green function vanishes at infinity, a solution of $\partial\dbar u=\alpha$ exists such that $\lim_{x\to \infty}u(x)=0$.
\end{lemma}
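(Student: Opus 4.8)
The plan is to reduce the $\partial\dbar$-equation to a pair of scalar Laplace equations, following the strategy announced in the introduction: ``to solve the $\partial\dbar-$equation, it is enough to solve the Laplace equation on functions and integrate by parts if the geometry allows.'' The key input is the Green function, which gives a right inverse to the Laplacian $\Delta$ on functions. So the first step is to use Lemma \ref{Hodge index} (2): since $\alpha$ is closed with compact support and $\int_X \alpha\wedge\bar\alpha\wedge\omega^{n-2}=0$, we know $\alpha$ is orthogonal to the $l^2$-harmonic forms and is $d$-exact. Being a real $(1,1)$-form that is $d$-exact, I would first write $\alpha = d\eta$ and then seek to promote this to $\partial\dbar$-exactness.

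The main mechanism is to solve $\Delta u = c\cdot(\text{something built from }\alpha)$ using the Green operator $G$ provided by the Green function, and then verify by integration by parts that the candidate $u$ actually satisfies $\partial\dbar u = \alpha$. Concretely, since $\alpha$ is a compactly supported $(1,1)$-form, I would consider the scalar function $\Lambda\alpha$ (its trace against $\omega$) and set $u$ to be (a constant multiple of) the Green potential $G(\Lambda\alpha)$; on a Kähler manifold the identity relating $\Delta$ on functions to the $dd^c$-operator means that $i\partial\dbar u$ and $\alpha$ differ by a harmonic piece. The compact support of $\alpha$ and the fact that $\alpha$ is already $d$-exact (from the first step) should force that harmonic error term to vanish, because a $d$-exact compactly supported form has zero pairing against harmonic forms, and one uses Gaffney-type integration by parts—legitimate here precisely because the manifold is complete—to justify that the boundary contributions at infinity disappear.

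For the ``moreover'' clause, the plan is to track the behavior of the potential at infinity. If the Green function $g(x,y)$ vanishes at infinity in the $x$ variable, then the Green potential $u(x) = \int_X g(x,y)\,\phi(y)\,dV(y)$ of a compactly supported source $\phi$ inherits decay: for fixed compactly supported $\phi$, as $x\to\infty$ one has $g(x,y)\to 0$ uniformly for $y$ in the support of $\phi$, so $u(x)\to 0$. The only subtlety is to ensure the particular solution we constructed is of this Green-potential form (rather than differing from it by a bounded harmonic function), which follows by selecting the canonical Green solution and invoking uniqueness up to harmonic functions together with the constraint that square-integrable harmonic functions vanish on an infinite-volume complete manifold (again Gaffney's theorem, as used in Lemma \ref{Hodge index}).

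I expect the main obstacle to be the integration-by-parts justification: converting ``$\alpha$ is $d$-exact and orthogonal to harmonic forms'' into the genuine solvability of $\partial\dbar u = \alpha$ with a controlled $u$ requires handling a chain of identities of the form $\int \alpha\wedge\bar\beta = \int (\partial\dbar u)\wedge \bar\beta$ for test forms $\beta$, and each such step needs the completeness of $(X,\omega)$ to discard boundary terms at infinity, even though the potential $u$ itself need not have compact support. The delicate point is that $u = G(\Lambda\alpha)$ need not be square integrable, so one cannot directly apply the Hilbert-space orthogonality; instead I would work through a cutoff exhaustion $\theta_i$ with $\Adh{d\theta_i}\to 0$ in $l^2$, exactly as in the proof of Lemma \ref{Hodge index} (2), and take limits to recover $\partial\dbar u = \alpha$ in the sense of currents.
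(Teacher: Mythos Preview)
Your overall strategy matches the paper's: set $u=G(\Lambda\alpha)$ so that $\Lambda(\alpha-dd^{c}u)=0$, then argue via cutoffs that the error $\beta:=\alpha-dd^{c}u$ vanishes. Your treatment of the ``moreover'' clause is also correct.

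However, the mechanism you propose for killing $\beta$ is not the one the paper uses, and as written it has a gap. You call $\beta$ a ``harmonic piece'' and appeal to the fact that $\alpha$ is orthogonal to $l^{2}$-harmonic forms. But you have no a priori reason for $\beta$ to lie in $l^{2}$: outside $\supp(\alpha)$ the function $u$ is harmonic, yet $dd^{c}u$ is merely trace-free there, not zero, so $\beta$ is not compactly supported and the Hilbert-space orthogonality argument does not apply directly. Your cutoff suggestion at the end is in the right spirit, but pairing against test forms does not by itself force $\beta=0$.

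The paper's argument is more direct. The key point is that $\beta$ is a closed real \emph{primitive} $(1,1)$-form, so the Hodge--Riemann relations give the \emph{pointwise} inequality $\beta\wedge\bar\beta\wedge\omega^{n-2}\leq 0$. Using $\theta\equiv 1$ on $\supp(\alpha)$, $d\alpha=0$, and the hypothesis $\int_{X}\alpha\wedge\bar\alpha\wedge\omega^{n-2}=0$, one computes
\[
0\;\geq\;\int_{X}\theta\,\beta\wedge\bar\beta\wedge\omega^{n-2}\;=\;-\int_{X}dd^{c}\theta\wedge du\wedge d^{c}u\wedge\omega^{n-2}.
\]
Since $du\in l^{2}$ (this is the input from \cite{NapRam1995}), taking $\theta=\chi(\epsilon f)$ with $f$ an exhaustion of bounded gradient and Hessian makes the right-hand side $O(\epsilon)$. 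Letting $\epsilon\to 0$ yields $\int_{X}\beta\wedge\bar\beta\wedge\omega^{n-2}=0$, and the pointwise sign then forces $\beta\equiv 0$. Note also that the detour through Lemma~\ref{Hodge index} is unnecessary (and that lemma assumes infinite volume, which is not among the stated hypotheses here).
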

\begin{remark}

 In case $\lambda_{1}$, the first eigenvalue of $\Delta$ on square integrable functions, is strictly positive then the function $u$ will belong to the domain of $d$ acting on square integrable functions.
\end{remark}

\begin{proof}
We may assume that $\alpha$ is real. Let $G$ be the Green function of the hyperbolic complete K\"{a}hler manifold $(X,\omega)$. Then $G(\Lambda \alpha)=u$ is a function such that $\Lambda \alpha=-\Delta u=\Lambda i\partial\dbar u$. Moreover $du$ is square integrable (\cite{NapRam1995} lemma 1.6 p.815).
Hence the closed form  $\alpha-dd^{c} u$ is pointwise primitive. Let $\theta$ be a positive smooth function with compact support such that $\theta=1$ on a neighborhood of $supp(\alpha)$. Then:
\begin{eqnarray}
0\geq \int_{X}\theta\left(\alpha-dd^{c}u\right)\wedge \Adh{\left(\alpha-dd^{c}u\right)} &= & -\int_{X}dd^{c}\theta(du\wedge d^{c}u). 
\end{eqnarray}
There exists a positive exhaustion function $f$ on $X$ with bounded gradient and bounded hessian for $(X,\omega)$ has bounded geometry. Let $\theta =\chi(\epsilon f)$ where $\chi$ is a smooth positive function on $\RR$, $\chi_{|]-\infty,1]}=1$, $\chi_{|[2,+\infty[}=0$. Letting $\epsilon$ goes to zero, one sees that $ \int_{X}\left(\alpha-dd^{c}u\right)\wedge \Adh{\left(\alpha-dd^{c}u\right)}=0$.
Therefore
$$\alpha=dd^{c}u\,.$$ 
\end{proof}

\subsection{Setting} \bigskip
We consider a compact divisor $ C$ in a complex K\"{a}hler manifold of bounded geometry admitting a Green function. We assume that $C.C=0$.

With the notations of the lemma, we know that $C=dd^{c}\ln|s|^{2}+\alpha=dd^{c }(\ln|s|^{2}+u)$ is cohomologically trivial. Assume moreover the Green function is proper then
\begin{trivlist}
\item[i)] the plurisubharmonic function $(\ln|s|^{2}+u)$ is strictly negative on $X$,
\item[ii)] the set $\{\ln|s|^{2}+u\leq -a\}$ $(a>0)$ are compact subsets in $X$.
\end{trivlist}
Indeed, $u$ is pluriharmonic on $X\setminus \supp(\alpha)$ 
 and it vanishes at infinity (if $\lambda_{1}>0$ then $u_{|X\setminus \supp(\alpha)}$ is a square integrable pluriharmonic function in a manifold of bounded geometry, hence it vanishes at infinity). Therefore $(\ln|s|^{2}+u)$ is strictly negative for it is non constant. The second assertion follows for $\supp(\ln|s|^{2})$ is compact. 
 
 In the case of a proper fibration to a curve $f:X\to B$, the function $(\ln|s|^{2}+u)$ is the pull-back of the Green function with a pole on $f(C)$.
 
 \begin{remark} \label{remarque}
 Any non compact irreducible analytic set of strictly positive dimension in $X$ intersects $\supp(\alpha)$:
assume the opposite, $u$ restricted to such a set would be pluriharmonic and would vanish at infinity. Hence $u$ would be vanishing and $ \ln|s|^{2}+u$ would be constant. In particular $ C$ intersects any non compact positive dimensional irreducible analytic set in $X$ for we can shrink $\supp(\alpha)$ in an arbitrary neighborhood of $ C$.
 \end{remark}

We will use the following lemma (see e.g. \cite{NapRam2001} lemma 2.1 p.389 or \cite{Per2006} for a statement using foliations):
\begin{lemma} Let $u, v$ be pluriharmonic functions defined on a complex manifold $M$. Assume the level set of $v$ are compact or empty. Then $\partial u\wedge \partial v=0$ and
\item[1)] either there is a non closed leaf and $\partial u=c\partial v$ for some constant $c$,
\item[2)] or every leaf is closed, there exists a holomorphic map $f:M\to \PP^{1}$ such that $f$ restricted to the leaf is constant and $\partial u=f\partial v$.
\end{lemma}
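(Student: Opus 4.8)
The plan is to first prove the pointwise proportionality $\partial u\wedge\partial v=0$ by a global Stokes argument that uses the compactness of the sublevel sets of $v$, and then to extract the dichotomy from the leaf structure of the resulting holomorphic foliation. Since $u$ and $v$ are pluriharmonic, the $(1,0)$-forms $\partial u$ and $\partial v$ are holomorphic and $d$-closed; set $\Phi=\partial u\wedge\partial v$, a holomorphic (hence $d$-closed) $(2,0)$-form, and $\bar\Phi=\bar\partial u\wedge\bar\partial v$. The computational heart is the identity
\begin{equation*}
\Phi\wedge\bar\Phi\wedge\omega^{n-2}=d\bigl(u\,\partial v\wedge\bar\Phi\wedge\omega^{n-2}\bigr),
\end{equation*}
which I would verify as follows: because $\partial v$, $\bar\Phi$ and $\omega$ are all closed, differentiating leaves only $du\wedge\partial v\wedge\bar\Phi\wedge\omega^{n-2}$, and writing $du\wedge\partial v=\Phi+\bar\partial u\wedge\partial v$ kills the second term since $\bar\Phi$ already carries the factor $\bar\partial u$ (so $\bar\partial u\wedge\partial v\wedge\bar\Phi$ has a repeated $\bar\partial u$).

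Next I would choose two regular values $a<b$ of $v$ and integrate over $\Omega=\{a<v<b\}$, which is relatively compact \emph{precisely because} the sublevel sets of $v$ are compact. Stokes' theorem gives $\int_\Omega\Phi\wedge\bar\Phi\wedge\omega^{n-2}=\int_{\partial\Omega}u\,\partial v\wedge\bar\Phi\wedge\omega^{n-2}$. On a level set $\{v=c\}$ the tangential part of $dv$ vanishes, so the restrictions of $\partial v$ and $\bar\partial v$ are opposite; substituting into $\partial v\wedge\bar\Phi=\partial v\wedge\bar\partial u\wedge\bar\partial v$ produces a repeated factor $\bar\partial v\wedge\bar\partial v=0$, so the boundary integral vanishes. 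As $\Phi\wedge\bar\Phi\wedge\omega^{n-2}$ is, up to a positive constant, $|\Phi|^2$ times the volume form, this forces $\Phi\equiv0$ on each such $\Omega$; letting $a,b$ range over regular values (dense by Sard, $v$ being real-analytic) exhausts $M$, so $\partial u\wedge\partial v=0$ everywhere.

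Granted the forms are everywhere proportional (and $\partial v\not\equiv0$), the ratio $f=\partial u/\partial v$ is a globally well-defined meromorphic function, i.e.\ a holomorphic map $f:M\to\PP^1$, with $\partial u=f\,\partial v$ by construction. Differentiating this relation and using $d\partial u=d\partial v=0$ gives $df\wedge\partial v=0$, so $f$ is constant along the leaves of the foliation defined by $\partial v$; in local coordinates where $\partial v=\tfrac12\,dh$ for a holomorphic submersion $h$, this reads $f=\phi(h)$ with $\phi$ holomorphic. This already delivers alternative (2): every leaf is a component of a level set of $h$, $f$ is constant on it, and $\partial u=f\,\partial v$.

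It remains to obtain alternative (1) from the presence of a non-closed leaf, and this is where the compactness hypothesis enters a second time and is the main obstacle. A leaf $L$ lies in a single compact level set $\{v=c\}$, which is foliated by the curves $\{\re h=c\}$; the leaf fails to be closed exactly when the transverse holonomy makes $\im h$ accumulate, so that $\phi\circ h$ is forced to be constant along a subset of the vertical line $\{\re h=c\}$ having an accumulation point. Constancy of $f=\phi(h)$ on $\bar L$ then makes the holomorphic $\phi$ constant on such a set, whence by the identity theorem $\phi$, and so $f$, equals a constant $c$, giving $\partial u=c\,\partial v$. I expect the delicate step to be making precise the implication ``non-closed leaf $\Rightarrow$ $\im h$ accumulates on a vertical segment'', i.e.\ controlling the transverse dynamics of the foliation inside the compact hypersurface $\{v=c\}$; pluriharmonicity of $v$ together with compactness of the level set are exactly what turn this transverse motion into a translation-type flow in $\im h$, so that non-closedness forces the density needed to invoke the identity theorem.
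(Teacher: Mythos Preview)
The paper does not prove this lemma; it simply quotes it with the reference ``see e.g.\ \cite{NapRam2001} lemma~2.1 p.~389 or \cite{Per2006}''. So there is no in-paper argument to compare against, and your proposal should be read as an independent attempt at the cited result.

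Your strategy is sound and is essentially the one used in the literature you would be reconstructing: a Stokes computation on a band $\{a<v<b\}$ to force $\partial u\wedge\partial v=0$, followed by an analysis of the meromorphic ratio $f=\partial u/\partial v$ along the leaves. The identity $\Phi\wedge\bar\Phi\wedge\omega^{n-2}=d(u\,\partial v\wedge\bar\Phi\wedge\omega^{n-2})$ and the boundary vanishing via $\partial v\big|_{\{v=c\}}=-\bar\partial v\big|_{\{v=c\}}$ are correct. The leaf argument for alternative~(1) is also right: a non-closed leaf inside the compact level set $\{v=c\}$ forces the local values $h_j$ (with $\re h_j=c$) to accumulate, and then the identity theorem applied to the local $\phi$ with $f=\phi(h)$ makes $f$ constant on an open set, hence globally.

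Two points deserve tightening. First, you invoke a K\"ahler form $\omega$, which the lemma as stated does not assume; in the paper (and in \cite{NapRam2001}) the ambient manifold is K\"ahler, so this is harmless in context, but you should state the hypothesis. Second, you write that $\{a<v<b\}$ is relatively compact ``because the \emph{sublevel} sets of $v$ are compact'', whereas the hypothesis is on \emph{level} sets. The passage from compact level sets to compactness of $v^{-1}([a,b])$ is not entirely automatic; one standard way is to note that since $v$ is pluriharmonic and nonconstant it is open, and then a short properness argument (or simply choosing $a,b$ in a small interval around a fixed value, using that $v^{-1}(c)$ has a compact neighbourhood whose boundary avoids nearby level sets) gives what you need. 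Once this is said, your proof goes through.
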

We deduce a second version of a factorisation theorem for divisors with vanishing self-intersection.
\begin{proposition} Let $C_{0}\to Y$ be an effective divisor with vanishing self-intersection in a compact K\"{a}hler manifold $(Y,\omega_{0})$. Assume that $G=\mathrm{Im}(\pi_{1}(C_{0})\to \pi_{1}(Y))$ has infinite index . Let $p: X\to Y$ be the covering of $Y$ with fundamental group $G$.

Assume that $(X,p^{*}\omega_{0})$ admits a proper Green function (e.g. $\lambda_{1}$, the first eigenvalue of the laplacian acting on functions, is strictly positive). Then there exists a holomorphic map $f:Y\to B$ such that a multiple of $C_{0}$ is a multiple of a fiber of $f$.
\end{proposition}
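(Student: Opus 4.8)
The plan is to pull the whole problem up to the covering $X$, solve the Poincar\'e--Lelong equation there by potential theory, read off a fibration from the resulting logarithmic $1$-form, and finally descend to $Y$ by Theorem~\ref{deformation}. First I would lift the divisor: since $\pi_{1}(X)=G=\mathrm{Im}(\pi_{1}(C_{0})\to\pi_{1}(Y))$, the inclusion $C_{0}\hookrightarrow Y$ lifts to a \emph{compact} connected component $C$ of $p^{-1}(|C_{0}|)$, biholomorphic to $C_{0}$; as $p$ is \'etale near $C$ one gets $C.C=C_{0}.C_{0}=0$. Because $G$ has infinite index, $(X,p^{*}\omega_{0})$ is a non-compact covering of a compact K\"ahler manifold, hence of bounded geometry and of infinite volume, so Lemma~\ref{Hodge index} and Corollary~\ref{negativity} are available. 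Using Zariski's Lemma~\ref{Zariski's lemma} I replace $C_{0}$ by the minimal effective divisor with support $|C_{0}|$ and vanishing self-intersection, so that $C_{0}$, hence $C$, is not a non-trivial sum of effective divisors with vanishing self-intersection.

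Next I would solve the equation. Pick a section $s$ with $(s)=C$ and a metric on $[C]$ whose Chern form $\alpha$ has compact support and with $|s|\equiv 1$ off a small neighborhood of $C$; then $\int_{X}\alpha\wedge\bar\alpha\wedge\omega^{n-2}=C.C=0$. The lemma solving $\partial\dbar$ from the $\Delta$-equation then applies, and since the Green function is proper (hence vanishes at infinity and has compact sublevel sets) it produces $u$ with $\partial\dbar u=\alpha$ and $u\to 0$ at infinity. As explained in the Setting, $P=\ln|s|^{2}+u$ is plurisubharmonic, strictly negative, pluriharmonic on $X\setminus|C|$ with a logarithmic pole along $C$, and its sublevel sets $\{P\le -a\}$ $(a>0)$ are compact. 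Consequently $\partial P$ is a closed, square integrable holomorphic $1$-form on $X\setminus|C|$ with logarithmic poles along $C$. Since $P$ is constant along the leaves of the holomorphic foliation defined by $\partial P$, every leaf is contained in a compact level set $\{P=c\}$ with $c<0$.

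It then remains to integrate the foliation, for which I would apply the foliation lemma stated above to $v=P$ (together with a suitable auxiliary pluriharmonic function), whose level sets are compact. \textbf{The main obstacle is to land in its second alternative, i.e. to exclude the existence of a non-closed leaf.} Here the properness of the Green function is decisive: every leaf sits inside a compact level set of $P$, so any \emph{closed} leaf is automatically compact, and Remark~\ref{remarque} forbids a non-compact positive-dimensional analytic set lying in a level set $\{P=c\}$ disjoint from $C$. The delicate point, following the method of Napier--Ramachandran, is to rule out a \emph{dense} leaf; this rests again on the compactness of the level sets and the control on the period group of the $L^{2}$ form $\partial P$. Granting this, we are in the second alternative: the leaves are the fibers of a holomorphic map to $\PP^{1}$ near $|C|$, constant on leaves, and the leaf through $|C|$ is $|C|$ itself.

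Finally I would globalize. By Lemma~\ref{fibre} the above exhibits a multiple of $C$ as a fiber of a proper holomorphic map defined near $|C|$, and in particular a non-constant holomorphic function exists on a neighborhood of the compact component $C$ of $p^{-1}(|C_{0}|)$. Theorem~\ref{deformation}(1) then extends this to a global proper map $X\to B'$ making $X$ holomorphically convex, and Theorem~\ref{deformation}(2) descends the construction to a holomorphic map $f:Y\to B$ for which a multiple of $C_{0}$ is a multiple of a fiber, which is the assertion of the proposition.
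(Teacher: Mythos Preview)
Your overall architecture---lift $C_{0}$ to a compact $C\subset X$, solve $\partial\dbar u=\alpha$ by potential theory, form $P=\ln|s|^{2}+u$, and then descend via Theorem~\ref{deformation}---is exactly the paper's. The place where your argument diverges, and where it has a real gap, is the integration of the foliation. You recognise that the foliation lemma needs \emph{two} pluriharmonic functions, but you only name one ($v=P$) and leave the second as ``a suitable auxiliary pluriharmonic function''; you then acknowledge that excluding a non-closed leaf is ``the main obstacle'' and invoke, without details, ``control on the period group''. That step is not carried out, and the argument you sketch (leaves sit in compact level sets, Remark~\ref{remarque}) only controls \emph{closed} analytic subsets, not a possibly dense leaf.

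The paper's device is different and much simpler, and it is the missing idea in your proof. One does not try to rule out dense leaves on $X\setminus|C|$. Instead one observes that $p^{-1}(|C_{0}|)$ has infinitely many connected components $C_{1}=C,\,C_{2},\ldots$, all compact by Remark~\ref{remarque}. Push $P$ down through the local section $\sigma:W\to W'$ to get $v$ on a neighbourhood $W$ of $C_{0}$, then pull it back to a neighbourhood $B_{k}(-A)$ of some $C_{k}$ with $k\neq 1$, obtaining $v_{k}=v\circ p$. Now apply the foliation lemma on $B_{k}(-A)\setminus C_{k}$ to the pair $(u,\,v_{k})$: by construction $\alpha$ is supported near $C_{1}$, so $u$ is \emph{smooth and pluriharmonic} near $C_{k}$, while $v_{k}$ has a \emph{logarithmic pole} along $C_{k}$. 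This asymmetry kills alternative~(1) outright, since $\partial u=c\,\partial v_{k}$ is impossible near $C_{k}$; and in alternative~(2) it forces the holomorphic factor $f_{k}$ in $\partial u=f_{k}\partial v_{k}$ to extend across $C_{k}$ and vanish there, producing the non-constant holomorphic function needed for Theorem~\ref{deformation}(2). No delicate analysis of leaves or periods is required once one works near a \emph{second} component $C_{k}$ rather than near $C$ itself.
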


\begin{proof} We may assume that $C_{0}$ is connected and minimal with the above properties. By hypothesis there exists a divisor $C$ in $ X$ that is mapped biholomorphically to $C_{0}$ by $p$.

Then there exists a neighborhood $W'$ of $ C$ which is biholomorphic to a neighborhood $W$ of $C$. The existence of a section $\sigma:W\to W'$, defines a plurisubharmonic function  $v=(\ln|s|^{2}+u)\circ \sigma $ on $W$ which is pluriharmonic away from $C_{0}$.
Hence $C_{0}$ admits a basis of Levi flat neighborhoods.  Let $( C_{j})_{j\in \NN^{*}}$ be the connected components of $p^{-1}(C_{0})$. Assume that $ C= C_{1}$. Remark \ref{remarque} implies that $ C_{k}$ is compact. Note that $p$ is an infinite covering hence there are an infinite number of connected component.

For $a>A$ large enough, a connected component $B_{k}(-a)$ of $p^{-1}(\{v<-a\})$ contains only one connected component $ C_{k}$ of $p^{-1}(C_{0})$.
Let $k\not=1$. Let $v_{k}=v\circ p_{|B_{k}(-A)}$ be the restriction to a neighborhood of $ C_{k}$ of  the pullback  by $p$ of $v$. 

If the leaves of $\{v_{k}=-a\}$ were non integrable, there would exist a complex number $c$ such that on $B_{k}(-A)\setminus C_{k}$, $\partial u=c \partial v_{k}$. However $\partial v_{k}$ has a logarithmic pole on $ C_{k}$ but $u$ is smooth on a neighborhood of $C_{k}$. Hence the leaves are closed and there exists a holomorphic map $f_{k}: B_{k}(-A)\setminus C_{k}\to \PP^{1}$ such that $\partial u=f_{k}\partial v_{k}$. But $u$ is smooth and $\partial v_{k}$ has a logarithmic pole on $C_{k}$, hence $f_{k}$ extends as a holomorphic map to $B_{k}(-A)$ vanishing on $C_{k}$. We apply theorem \ref{deformation} to conclude that a multiple of $C_{k}$ and of $C_{0}$ are fibers of non constant holomorphic maps to a curve.

\end{proof}

The corollary applies in particular if  $i_{*}(\pi_{1}(C))$ is amenable and $\pi_{1}(Y)$ is not amenable using \cite{Bro}. 
Also it applies in the case of a Galois covering with Deck transformation group $G$ which is not a finite extension of $\ZZ$ or $\ZZ^{2}$ (see   the arguments in \cite{Ram1996}, \cite{NapRam2001} p.399).

\begin{remark}
F. Campana indicated to us that the original problem has a positive answer in the case of a smooth rational curve or a smooth elliptic curve in a complex K\"{a}hler surface.  The proof is algebraic and uses the Kodaira's classification.
\end{remark}
%

\end{document}